\documentclass[12pt, reqno]{amsart}
\usepackage{amssymb,dsfont}
\usepackage{eucal}
\usepackage{amsmath}
\usepackage{amscd}
\usepackage[dvips]{color}
\usepackage{multicol}
\usepackage[all]{xy}           
\usepackage{graphicx}
\usepackage{color}
\usepackage{colordvi}
\usepackage{xspace}
\usepackage{txfonts}
\usepackage{lscape}
\usepackage{tikz} 
\usepackage{bm}

\numberwithin{equation}{section}
\usepackage[shortlabels]{enumitem}
\usepackage{ifpdf}
\ifpdf
 \usepackage[colorlinks,final,backref=page,hyperindex]{hyperref}
\else
\fi
\usepackage{tikz}
\usepackage[active]{srcltx}

\makeatletter

\newtheorem{theorem}{Theorem}[section]
\newtheorem{definition}[theorem]{Definition}
\newtheorem{lemma}[theorem]{Lemma}
\newtheorem{proposition}[theorem]{Proposition}
\newtheorem{remark}[theorem]{Remark}
\newtheorem{corollary}[theorem]{Corollary}

\numberwithin{equation}{section}

\def\({\left(}
\def\){\right)}
\def \<{\langle}
\def \>{\rangle}
\def\al{\alpha}
\def\be{\beta}
\def\dt{\delta}

\def\vf{\varphi}
\def\lmd{\lambda}
\def\g{\mathfrak{g}}
\def\h{\mathfrak{h}}
\def\U{\mathcal{U}}
\def\V{\mathcal V}
\newcommand{\C}{\mathbb C}
\newcommand{\R}{\mathbb{R}}
\newcommand{\N}{\mathbb{N}}
\newcommand{\Z}{\mathbb{Z}}

\newcommand{\spanc}[1]{\mathrm{Span}_{\C}\left\{#1\right\}}
\newcommand{\ov}[1]{\overline{#1}}
\renewcommand{\H}{\mathcal{H}}

\def\cc{\mathfrak c}
\def\vvf{v_{\vf}}
\def\jc{J^C}
\def\hj{\H_J}
\def\gjc{\g_{\jc}}
\def\mgvf{M_{\g,\vf}}
\def\mgjcpsi{M_{\gjc,\psi}}
\def\mgjcpsie{\(M_{\gjc,\psi}\)^e}
\def\mvphi{M_{\V,\phi}}
\def\mhjphi{M_{\hj,\phi}}
\def\mhjphig{\(M_{\hj,\phi}\)^{\g}}
\def\cf{\text{col}(F)}
\def\rf{\text{row}(F)}

\def\invp{\theta^+_{\al,\be}}
\def\invm{\theta^-_{\al,\be}}
\def\coefb{B_{k,j}^{(n)}}

\newcommand{\elei}[2]{I_{#1}^{(#2)}}
\newcommand{\vct}[2]{v_{#1+\frac{#2}p}}
\newcommand{\thl}[1]{\theta(L_{#1})}
\newcommand{\thi}[2]{\theta(I_{#1}^{(#2)})}

\begin{document}
\title[Unitary modules over the gap-$p$ Virasoro algebras]
{Unitary modules over the gap-$p$ Virasoro algebras}
  \author{Chengkang Xu}
  \address{C. Xu: School of Mathematical Sciences, Shangrao Normal Collage, Shangrao, Jiangxi, P. R. China\\ Email: xiaoxiongxu@126.com}

\date{}
 \keywords{Gap-$p$ Virasoro algebra, Virasoro algebra, unitary, highest weight module, module of intermediate series}
  \subjclass[2020]{17B10, 17B65, 17B68}
\maketitle

\begin{abstract}
For any irreducible Harish-Chandra module $V$ over the gap-$p$ Virasoro algebra, we determine the condition for $V$ to be unitary.
\end{abstract}

\section{Introduction}

Unitary modules for Lie algebras play significant roles in many areas of mathematics and physics,
such as statistical mechanics and the two-dimensional conformal quantum field theory.
In \cite{CP1988}, the authors classified irreducible unitary Harish-Chandra modules for the Virasoro algebra and the Virasoro superalgebras (both Ramond and Neveu-Schwarz cases).
These modules are the modules of intermediate series, highest weight modules or lowest weight modules satisfying certain conditions.
Unitary modules were also considered for many other infinite dimensional Lie algebras over the past decades,
such as the affine Kac-Moody algebras\cite{CP1986,CP1987},
Lie algebras of infinite matrices\cite{Palev1,Palev2},
the twisted Heisenberg-Virasoro algebra\cite{Kwon,ZTL},
the $W$-infinity algebras\cite{KL},
the $W$-algebra $W(2,2)$\cite{ZT1},
the Schr{\"o}dinger-Virasoro algebra\cite{ZT2},
and so on.

In this paper we study unitary modules over the gap-$p$ Virasoro algebra $\g$,
which was first introduced in \cite{Xu}.
The gap-$p$ Virasoro algebra has a close relation to the twisted Heisenberg-Virasoro algebra and the algebras of derivations over rational quantum tori.
It can be realized as the universal central extensions of some subalgebras of these two Lie algebras.
Moreover, when $p=2$, $\g$ is isomorphic to the mirror Heisenberg-Virasoro algebra introduced in \cite{Ba} and named in \cite{LPXZ}.
In \cite{Xu}, irreducible Harish-Chandra modules over $\g$ were classified.
Such modules are modules of intermediate series, highest weight modules or lowest weight modules.

Let us now briefly describe the organization of this paper.
In Section 2, we recall the notions of Harish-Chandra modules and unitary modules, and some related results about the Virasoro algebra and the gap-$p$ Virasoro algebra $\g$.
In Section 3, we explicitly determine the irreducibility of the Verma modules, and the structures of irreducible highest weight modules for $\g$.
Section 4 is dedicated to determine the conjugate-linear anti-involutions of $\g$,
and Section 5 is to determine the conditions for irreducible Harish-Chandra modules over $\g$ to be unitary.

Throughout this paper, we use the symbols $\Z$, $\N$, $\Z_+$, $\R$, and $\C$ to denote the sets of integers, non-negative integers, positive integers, real numbers, complex numbers respectively,
and for any subset $N$ of $\C$ we write $N^\times=N\setminus\{0\}$.
Denote by $S^1$ the set of complex numbers of modulus one.
All vector spaces and algebras are over $\C$, the dual space of a vector space $V$ is denoted by $V^*$, and the universal enveloping algebra of a Lie algebra $\mathcal L$ is denoted by $\U(\mathcal L)$.
We fix a positive integer $p>1$ for this paper,
and for any $k\in\Z$, denote by $\ov k$ the residue of $k$ by $p$.
Finally, for $a\in\C$ we denote by $\ov a$ the conjugate of $a$,
and for any finite set $A$ we denote by $|A|$ the cardinal number of $A$.

\section{Preliminaries}
In this section, we recall some related notations and known results about the Virasoro algebra and the gap-$p$ Virasoro algebra.

\begin{definition}
The \textbf{gap-$p$ Virasoro algebra} is a complex Lie algebra $\g$ with a basis
$$\left\{L_{n},\elei ni, C_j\, \mid \, n\in\Z, 1\leq i\leq p-1, 0\leq j\leq [\frac p2] \right\}$$
and nontrivial relations
\begin{align*}
  &[L_m,L_n]=(m-n)L_{m+n}+\frac1{12}(m^3-m)C_0\dt_{m+n,0},\\
  &[\elei mi,\elei nj]=\left(m+\frac ip\right)\dt_{i+j,p}\dt_{m+n+1,0} C_{\min\{i,p-i\}},\quad
  [L_m,\elei ni]=-\left(n+\frac ip\right)\elei{m+n}i,
\end{align*}
where $m,n\in\Z$, $1\leq i,j\leq p-1$, and
$[\frac p2]$ denotes the largest integer less than or equal to $\frac p2$.
\end{definition}

\begin{remark}
 To simplify notations, we write $C_i=C_{p-i}$ for $[\frac p2]\le i\le p-1$ in this paper.
\end{remark}

Let $\mathcal L$ be a Lie algebra with a triangular decomposition in the sense of \cite{MP}, $\mathcal L=\mathcal L_-\oplus\mathcal L_0\oplus\mathcal L_+$,
where $\mathcal L_0$ is an abelian subalgebra.

\begin{definition}
A module $V$ over $\mathcal L$ is called a \textbf{weight module}
if $V=\oplus_{\lmd\in\mathcal L_0^*}V_\lmd$ where $V_\lmd=\{v\in V\mid xv=\lmd(x)v\text{ for any }x\in\mathcal L_0\}$.
A weight $\mathcal L$-module $V$ is called a \textbf{Harish-Chandra module} if $\dim V_\lmd<\infty$ for any $\lmd\in\mathcal L_0^*$,
called a \textbf{module of intermediate series} if all $\dim V_\lmd\le 1$,
and called a \textbf{highest} (resp. \textbf{lowest}) \textbf{weight module} of \textbf{highest} (resp. \textbf{lowest}) \textbf{weight} $\vf\in\mathcal L_0^*$
if $V$ is generated by a \textbf{highest} (resp. \textbf{lowest}) \textbf{weight vector} $v$ such that $\mathcal L_+v=0$ (resp. $\mathcal L_-v=0$) and $xv=\vf(x)v$ for any $x\in\mathcal L_0$.
\end{definition}

Let $\vf\in\mathcal L_0^*$ and $\C\vvf$ be the 1-dimensional module over $\mathcal L_0\oplus\mathcal L_+$ defined by
\[x\vvf=\vf(x)\vvf,\ \mathcal L_+\vvf=0\quad\text{for any }x\in\mathcal L_0.\]
Then induced $\mathcal L$-module
\[M_{\mathcal L,\vf}=\U(\mathcal L)\otimes_{\U(\mathcal L_0\oplus\mathcal L_+)}\C\vvf\]
is called the \textbf{Verma module} of highest weight $\vf$.
For any highest weight module $V$ of highest weight $\vf$ with a highest weight vector $v$,
there is a natural $\mathcal L$-module epimorphism from $M_{\mathcal L,\vf}$ to $V$ defined by $\vvf\mapsto v$.

Set
\[\h=\spanc{L_0,C_i\mid 0\le i\le p-1},\]
which is a Cartan subalgebra of $\g$.
Clearly, $\h$ exhausts all semisimple elements in $\g$,
and the gap-$p$ Virasoro algebra $\g$ is equipped with a triangular decomposition $\g=\g_-\oplus\h\oplus\g_+$, where
\[\g_-=\spanc{L_n,\elei ni\mid n<0,1\le i\le p-1},\quad
  \g_+=\spanc{L_{n+1},\elei ni\mid n\in\N,1\le i\le p-1}.\]
Set
\[\V=\spanc{L_n,C_0\mid n\in\Z}
\quad\text{and}\quad
 \H=\spanc{\elei ni, C_i\mid n\in\Z,1\le i\le p-1}.\]
Clearly, $\V$ is a Virasoro algebra, $\H$ is a Heisenberg algebra and an ideal of $\g$.

Now we recall irreducible Harish-Chandra modules over the Virasoro algebra $\V$ and the gap-$p$ Virasoro algebra $\g$,
which were classified in \cite{Ma} and \cite{Xu} respectively.
They are modules of intermediate series, highest weight modules or lowest weight modules.

The Virasoro algebra $\V$ has a triangular decomposition $\V=\V_-\oplus\V_0\oplus\V_+$, where
\[\V_{\pm}=\V\cap\g_\pm=\spanc{L_n\mid \pm n\in\Z_+},\quad
   \V_0=\V\cap\h=\spanc{L_0,C_0}.\]
Then we have the notion of Verma module $\mvphi$ over $\V$ of highest weight $\phi\in\V_0^*$.
The following results are well known, and one may see \cite{FF} for reference.
\begin{proposition}\label{irrVmod}
 The Verma module $\mvphi$ over $\V$ is irreducible if and only if $\Phi_{h,c}(\al,\be)=0$ for some $\al,\be\in\Z_+$, where $h=\phi(L_0), c=\phi(C_0)$ and
 \[\Phi_{h,c}(\al,\be)=\(h+\frac{(\al^2-1)(c-13)}{24}+\frac{(\al\be-1)}2\)
                \(h+\frac{(\be^2-1)(c-13)}{24}+\frac{(\al\be-1)}2\)+\frac{(\al^2-\be^2)^2}{16}.\]
\end{proposition}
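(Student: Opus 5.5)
The plan is to reduce the question to the nondegeneracy of the Shapovalov form on $\mvphi$ and then use the Kac determinant formula from \cite{FF}. The formula yields the criterion with the quantifier on the vanishing side: some $\Phi_{h,c}(\al,\be)$ vanishes exactly when $\mvphi$ has a proper submodule. So the argument proves the equivalence between "$\Phi_{h,c}(\al,\be)=0$ for some $\al,\be\in\Z_+$" and the reducibility of $\mvphi$. I do not see how to obtain the word "irreducible" as printed, and I believe it is a slip. At $c=1$, $h=0$ we have $\Phi_{0,1}(1,1)=0$, yet $L_{-1}v_\phi$ is a singular vector, so $\mvphi$ is reducible there.

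First, write $\mvphi=\bigoplus_{n\in\N}(\mvphi)_{h+n}$, where the degree $n$ piece has basis $L_{-k_1}\cdots L_{-k_r}v_\phi$ with $k_1\ge\cdots\ge k_r\ge1$ and $\sum k_i=n$. Its dimension is the partition number $P(n)$.

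Second, let $\sigma$ be the anti-involution $L_m\mapsto L_{-m}$, $C_0\mapsto C_0$. Define the contravariant bilinear form $\langle\,,\rangle$ on $\mvphi$ by $\langle v_\phi,v_\phi\rangle=1$ and $\langle xu,w\rangle=\langle u,\sigma(x)w\rangle$. Distinct weight spaces are orthogonal, and the radical of the form is the unique maximal proper submodule. Hence $\mvphi$ is irreducible if and only if the Gram matrix $G_n(h,c)$ of each degree $n$ piece is nonsingular.

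Third, invoke the Kac determinant formula:
\[
\det G_n(h,c)=K_n\prod_{\substack{\al,\be\in\Z_+\\ \al\be\le n}}\Phi_{h,c}(\al,\be)^{P(n-\al\be)},\qquad K_n\neq0 .
\]
If one wanted to reprove it rather than cite it, this is the main obstacle. There are two parts. The first is to compare leading terms in $h$ to get the total degree $\sum_{\al\be\le n}P(n-\al\be)$. The second is to locate the zeros: for $(h,c)$ with $\Phi_{h,c}(\al,\be)=0$, use the Feigin--Fuchs/free-field construction to produce a singular vector in degree $\al\be$. The multiplicity $P(n-\al\be)$ then follows from the submodule it generates together with the degree count.

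Combining the second and third steps gives the result. If some $\Phi_{h,c}(\al,\be)=0$, then $\det G_{\al\be}=0$, so the radical is nonzero and $\mvphi$ is reducible. If no $\Phi_{h,c}(\al,\be)$ vanishes, every $G_n$ is invertible and $\mvphi$ is irreducible.
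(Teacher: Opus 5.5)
Your approach is essentially the paper's. The paper gives no argument of its own and simply cites \cite{FF}, i.e.\ the Kac--Feigin--Fuchs determinant, and your reduction through the contravariant form is the standard way to read the criterion off from it. You are also right that the printed statement has the wrong word: it should say \emph{reducible}. Your example works for every $c$, since $\Phi_{0,c}(1,1)=0$ and $L_{-1}v_\phi$ is singular whenever $h=0$. The same slip recurs in part (3) of Theorem \ref{thm:irrgmod}.

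One correction concerns the formula you quote. $\Phi_{h,c}(\al,\be)$ is symmetric in $\al,\be$ and monic quadratic in $h$, with roots the two Kac values $h_{\al,\be}(c)$ and $h_{\be,\al}(c)$. So your product over all ordered pairs is, up to a nonzero constant, $(\det G_n)^2$ rather than $\det G_n$.
\begin{itemize}
\item Already for $n=1$ your right-hand side is $K_1h^2$, while $\det G_1=2h$.
\item In general your right-hand side has $h$-degree $2\sum_{\al\be\le n}P(n-\al\be)$. This contradicts the degree count you state yourself, so the reproof you sketch would fail at the degree comparison.
\end{itemize}
The correct formula uses the linear factors $\bigl(h-h_{\al,\be}(c)\bigr)^{P(n-\al\be)}$ over ordered pairs. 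Equivalently, take $\Phi_{h,c}(\al,\be)^{P(n-\al\be)}$ over $\al<\be$, together with $\bigl(h+\frac{(\al^2-1)(c-1)}{24}\bigr)^{P(n-\al^2)}$ on the diagonal. Your final step uses only the vanishing locus, which is the same for both formulas, so your conclusion is unaffected.
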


For any $a,b\in\C$, there is a $\V$-module structure on the space $\oplus_{n\in\Z}\C v_n$ with actions
\[L_nv_k=-(a+k+bn)v_{n+k},\quad C_0v_k=0\quad\text{for any }n,k\in\Z.\]
The resulting $\V$-module of intermediate series is usually denoted by $V(a,b)$.
It is well known that $V(a,b)$ is reducible if and only if $a\in\Z$ and $b\in\{0,1\}$.

Let $a,b\in\C$ and $F=(F_{i,j})$ be a $(p-1)\times p$ matrix with index $1\le i\le p-1, 0\le j\le p-1$ satisfying that
\[ F_{s,i}F_{r,\,\ov{i+s}}= F_{r,i} F_{s,\,\ov{i+r}} \quad\text{for any }1\leq r,s\le p-1, 0\le i\le p-1, \]
and
\[i+j\in\cf\quad \text{if } j\in\cf \text{ and } i\in\rf,\]
where $\cf=\{j\mid F_{ij}\neq0\text{ for some }i\}$ and $\rf=\{i\mid F_{ij}\neq0\text{ for some }j\}$.
Then there is a $\g$-module structure on
the space $\oplus_{j\in\cf,k\in\Z}\C\vct kj$ with actions
\[ L_m\vct kj=-\(a+k+\frac jp+bm\)\vct{m+k}j,\quad
 \elei mi\vct kj=F_{i,j}\vct{m+k}{i+j}, \quad C_s\vct kj=0\]
for $m,k\in\Z, j\in\cf, 1\le i\le p-1, 0\le s\le p-1$.
The resulting $\g$-module is a module of intermediate series and we denote it by $V(a,b,F)$ in this paper.
For any $j\in\cf$, denote
$$V_{(j)}=\sum_{k\in\Z}\C v_{k+\frac jp},$$
which is a $\V$-module of intermediate series isomorphic to $V(a+\frac jp,b)$,
and $V(a,b,F)=\bigoplus_{j\in\cf}V_{(j)}$.
\begin{proposition}\cite[Proposition 4.2]{Xu}
 The $\g$-module $V(a,b,F)$ is reducible if and only if $|\cf|=1, a\in\Z$ and $b\in\{0,1\}$.
\end{proposition}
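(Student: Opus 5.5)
The plan is to show both directions directly, using the decomposition $V(a,b,F)=\bigoplus_{j\in\cf}V_{(j)}$ into $\V$-modules $V_{(j)}\cong V(a+\frac jp,b)$ together with the known reducibility criterion for $V(a,b)$.

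\emph{Sufficiency.} Suppose $|\cf|=1$, $a\in\Z$ and $b\in\{0,1\}$. Then $\cf=\{j\}$, and the condition ``$i+j\in\cf$ whenever $i\in\rf$'' forces $\rf=\emptyset$. Indeed, otherwise we would have $\ov{i+j}=j$ with $1\le i\le p-1$, which is impossible. Hence $F_{i,j}=0$ for all $i$, so $\H$ acts trivially. The module is then just the $\V$-module $V(a+\frac jp,b)$ with $\H$ acting by zero.

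We next need the weights to line up with a reducible Virasoro module. Here there is a subtlety: for the $\V$-module to be reducible we need $a+\frac jp\in\Z$, that is, $j=0$ when $a\in\Z$. I would read the statement with the normalization $0\in\cf$, or alternatively absorb $\frac jp$ into $a$ by reindexing. With $j=0$ the module is $V(a,b)$ with $a\in\Z$, $b\in\{0,1\}$, which is reducible. Explicitly, for $b=0$ the span of the $v_k$ with $k\neq -a$ is a proper submodule, and for $b=1$ the vector $v_{-a}$ spans a proper submodule.

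\emph{Necessity.} Suppose $W$ is a nonzero proper submodule. Since $L_0$ acts on $v_{k+\frac jp}$ by the scalar $-(a+k+\frac jp)$, and these scalars are distinct for distinct pairs $(k,j)$, $W$ is spanned by some of the basis vectors $\vct kj$.

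Consider first the case $|\cf|\ge2$. Pick $j\neq j'$ in $\cf$, and choose $s$ with $F_{s,j}\neq0$. The action of $\elei ms$ then moves vectors between components, taking $\vct kj$ to a nonzero multiple of $\vct{m+k}{s+j}$. The key step is to show that the multiplicative relation $F_{s,i}F_{r,\ov{i+s}}=F_{r,i}F_{s,\ov{i+r}}$, combined with the closure condition, makes the operators $\elei ms$ act injectively between components whenever the relevant $F$ entries are nonzero. Concretely, from $F_{s,j}\ne0$ one deduces that $\cf$ is a union of cosets on which the maps are all nonzero, so any basis vector of $W$ generates vectors in every component. Once $W$ contains vectors from every component, it still remains to get all of them. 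I would argue that $W\cap V_{(j)}$ is a nonzero $\V$-submodule of $V(a+\frac jp,b)$, and that translating by $\elei ms$ for arbitrary $m$ hits every index $k$. Hence $W$ contains all basis vectors, contradicting properness.

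It remains to treat the case $|\cf|=1$. Then, as shown above, $\H$ acts trivially, and reducibility of the $\V$-module $V(a+\frac jp,b)$ forces $a+\frac jp\in\Z$ and $b\in\{0,1\}$.

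\emph{Main obstacle.} The hardest step is the case $|\cf|\ge2$, namely proving that the $F$-action connects all components nontrivially. I would first try to show that, for $j\in\cf$, the set $\{i: F_{i,j}\neq0\}$ together with $0$ forms a subgroup $G$ of $\Z/p$ independent of $j$. Both the multiplicative identity and the closure condition would be used here. Then $\cf$ is a single coset of $G$, and $\H$ acts by nonzero scalars along the group translations. Since the index $m$ in $\elei ms$ is arbitrary, any nonzero weight vector generates everything.
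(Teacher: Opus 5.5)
The paper only quotes this result from \cite{Xu}, so your argument has to stand on its own. It breaks at the step you call the main obstacle, and the problem is that the claim there is false, not merely unproved. The commutation identity and the closure condition $\cf+\rf\subseteq\cf$ (mod $p$) only make $\cf$ stable under the subgroup generated by $\rf$. They do not make $\{i: F_{i,j}\neq0\}\cup\{0\}$ a subgroup independent of $j$: for $p=3$ with only the first row of $F$ nonzero, this set is $\{0,1\}$. Nor do they let one basis vector reach every component. For $p=4$, take $F_{2,j}=1$ for all $j$ and $F_{1,j}=F_{3,j}=0$. Both defining conditions hold and $\cf=\{0,1,2,3\}$. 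Yet $V_{(0)}\oplus V_{(2)}$ is a proper nonzero submodule for all $a,b$, since $\elei m2$ interchanges $V_{(0)}$ and $V_{(2)}$ while $\elei m1,\elei m3$ act by $0$. Requiring $\cf$ to be a single coset of the group generated by $\rf$ does not help either. Take $(F_{1,0},\dots,F_{1,3})=(0,1,0,1)$, $(F_{2,0},\dots,F_{2,3})=(1,1,1,1)$ and $F_{3,j}=0$. The conditions hold, $\rf=\{1,2\}$ generates $\Z/4$, and $\cf=\{0,1,2,3\}$. Still $\elei m1$ kills $V_{(0)}$ and $V_{(2)}$, so $V_{(0)}\oplus V_{(2)}$ is again a submodule. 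So for composite $p$ the ``only if'' direction as stated is false, and your transitivity claim cannot be established.

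The sufficiency half has a related defect. You correctly derive that $\cf=\{j\}$ forces $F_{i,j}=0$ for all $i$, but then $j\notin\cf$ by the definition of $\cf$. So $|\cf|=1$ never occurs, and your normalization $0\in\cf$ quietly replaces the statement by one in which $F=0$ means the $\V$-module $V(a,b)$ with $\H$ acting trivially. Also, your explicit submodules are swapped: for $b=0$ the submodule is $\C v_{-a}$, and for $b=1$ it is the span of the $v_k$ with $k\neq-a$.

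What survives is your mechanism. A submodule is spanned by basis vectors, and $\vct kj\in W$ with $F_{s,j}\neq0$ yields all of $V_{(\ov{j+s})}$ by varying $m$. Hence for $F\neq0$, $V(a,b,F)$ is irreducible if and only if the directed graph on $\cf$ with edges $j\to\ov{i+j}$ (for $F_{i,j}\neq0$) is strongly connected. The lemma that should replace your subgroup claim comes from the commutation identity: if $F_{s,j}\neq0$ and $F_{r,\ov{j+s}}\neq0$, then $F_{r,j}\neq0$. So out-sets shrink along edges and are constant, say equal to $S$, on a terminal strongly connected component $A$, which is then stable under translation by $S$. When $p$ is prime this forces $A=\cf=\{0,\dots,p-1\}$, which gives irreducibility. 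When $p$ is composite it does not, as the examples above show.
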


Next we recall some notions and results about unitary modules over $\V$.

\begin{definition}
 Let $\mathcal L$ be a Lie algebra.
 An $\mathcal L$-module $V$ is called \textbf{unitary} if $V$ admits a positive definite Hermitian form $\<\cdot,\cdot\>$ such that
 \[\<xu,v\>=\<u,\theta(x)v\>\quad\text{for any }x\in\mathcal L,u,v\in V,\]
 where $\theta:\mathcal L\longrightarrow\mathcal L$ is a \textbf{conjugate-linear anti-involution} on $\mathcal L$, i.e., a map satisfying that
 \[\theta^2=\mathrm{identity},\ \theta(x+y)=\theta(x)+\theta(y),\ \theta(ax)=\ov a\theta(x),\ \theta([x,y])=[\theta(y),\theta(x)]\]
 for any $a\in\C, x,y\in\mathcal L$.
\end{definition}

It is worthwhile to notice that unitary weight modules are completely reducible.
The following propositions are from \cite{CP1988}.
\begin{proposition}\label{prop:invvir}
 Any conjugate-linear anti-involution of $\V$ is one of the following types.\\
 (i) $\theta_\al^+(L_n)=\al^nL_{-n},\ \theta_\al^+(C_0)=C_0$ with $\al\in\R^\times$.\\
 (ii) $\theta_\al^-(L_n)=-\al^nL_{n},\ \theta_\al^-(C_0)=-C_0$ with $\al\in S^1$ the set of complex numbers of modulus one.
\end{proposition}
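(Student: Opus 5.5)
The plan is to pin down $\theta$ through its action on the grading. Write $\V=\bigoplus_n \C L_n\oplus\C C_0$, graded by the eigenvalues of $\mathrm{ad}\,L_0$.

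First, show that $\theta$ maps the center $\C C_0$ to itself. This holds because $\theta$ is bijective and sends $[x,y]$ to $[\theta y,\theta x]$, so it preserves the center.

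Second, show that $\theta(L_0)$ is semisimple and is not central. Since $\theta$ reverses brackets, $\mathrm{ad}\,\theta(L_0)$ acts on $\theta(L_n)$ by
\[[\theta(L_0),\theta(L_n)]=-\theta([L_0,L_n])=n\,\theta(L_n).\]
Hence $\theta(L_0)$ is ad-diagonalizable with eigenvalues exactly $\Z$. In the Virasoro algebra, the ad-semisimple elements with nonzero spectrum are conjugate-free: they must be of the form $aL_0+bC_0$. I would verify this directly. Write $\theta(L_0)=\sum_k x_kL_k+zC_0$. If some $x_k\neq0$ with $k\neq0$, look at the extreme indices to get a contradiction with semisimplicity: brackets with $L_n$ raise or lower the top degree, and no eigenvector relation can hold.

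From the eigenvalue relation, $\theta(L_0)=\epsilon L_0+zC_0$ with $\epsilon=\pm1$, and then $\theta(L_n)\in \C L_{\epsilon n}$ for $n\ne0$, up to a central correction that only matters at $n=0$. The two cases are as follows.
\begin{itemize}
\item If $\epsilon=1$: we have $\theta(L_n)=\lambda_nL_{-n}$ after checking signs. Let me redo this. Since $[\theta L_0,\theta L_n]=n\theta L_n$ and $[L_0,L_m]=-mL_m$, we need $\epsilon(-m)=n$, so $m=-\epsilon n$. Thus $\epsilon=1$ gives $\theta(L_n)=\lambda_nL_{-n}$, and $\epsilon=-1$ gives $\theta(L_n)=\lambda_nL_n$.
\item Apply $\theta$ to $[L_m,L_n]$ for $m+n\neq0$. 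In case (i) this gives $\lambda_{m+n}=\lambda_m\lambda_n$ (checking signs), so $\lambda_n=\al^n$ with $\al=\lambda_1$. In case (ii) it gives $\lambda_{m+n}=-\lambda_m\lambda_n$, so $\lambda_n=-\al^n$.
\item Next, $\theta(L_0)$ is determined by $[L_1,L_{-1}]$, which forces $z=0$ and $\lambda_0$ consistent.
\item The $m=-n$ bracket with the cubic term fixes $\theta(C_0)=\pm C_0$.
\end{itemize}

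Finally, impose the conditions that $\theta$ be an involution and conjugate-linear. In case (i) we get $\ov{\al^n}\al^{-n}=1$, so $\al\in\R^\times$. In case (ii) we get $\ov{\al}^n\al^n=1$, so $|\al|=1$. It remains to check that these maps are indeed anti-involutions, which is routine.

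The main obstacle is the semisimplicity step, namely showing that $\theta(L_0)\in\C L_0+\C C_0$. I would handle it by the leading-term argument on a general finite sum.
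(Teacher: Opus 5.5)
Your argument is correct. The paper does not prove this proposition itself (it is quoted from \cite{CP1988}), but your route is the same scheme the paper uses for $\g$ in Lemma~\ref{lem:invg} and Proposition~\ref{prop:invg}: $\theta$ preserves the center, the relations $[\theta(L_0),\theta(L_n)]=n\theta(L_n)$ force $\theta(L_0)=\pm L_0+zC_0$, and the brackets $[L_m,L_n]$ then give $\lambda_n=\pm\al^n$, $z=0$ and $\theta(C_0)=\pm C_0$. The paper simply asserts the step $\theta(L_0)\in\h$ there, whereas you supply a leading-term argument for it.

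One phrasing should be tightened. It is not true that no eigenvector relation can hold; for example, $L_1$ is an eigenvector of $\mathrm{ad}(L_0+L_1)$. The precise statement is as follows. If $\theta(L_0)$ has top degree $b>0$, then every non-central eigenvector of top degree $d$ must have $d=b$, because the coefficient $(b-d)x_by_d$ of $L_{b+d}$ in the bracket must vanish. Hence all $\theta(L_n)$ lie in $\bigoplus_{j\le b}\C L_j\oplus\C C_0$, which contradicts the surjectivity of $\theta$.
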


\begin{proposition}\label{prop:unitarycon}
 Let $V$ be a nontrivial irreducible weight module over $\V$.\\
 (1) If $V$ is unitary for some conjugate-linear anti-involution $\theta$ of $\V$, then $\theta=\theta_\al^+$ for some $\al>0$.\\
 (2) If $V$ is unitary for $\theta_\al^+$ with $\al>0$, then $V$ is unitary for $\theta_1^+$.
\end{proposition}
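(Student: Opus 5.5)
The plan is to dispose of the types in Proposition~\ref{prop:invvir} one at a time: first $\theta^-_\al$, then $\theta^+_\al$ with $\al<0$. Part (2) will follow from a rescaling automorphism. Throughout, $V$ is a nontrivial irreducible weight module with a positive definite form satisfying $\<xu,v\>=\<u,\theta(x)v\>$, and we write $\|u\|^2=\<u,u\>$.

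\emph{Type $\theta^-_\al$.} Here $\theta(L_0)=-L_0$.
\begin{itemize}
\item For weight vectors $u,w$ of $L_0$-weights $\lmd,\mu$ we get $\lmd\<u,w\>=-\ov\mu\<u,w\>$. Taking $u=w$ shows every weight is purely imaginary, and then distinct weight spaces are orthogonal.
\item Now take a weight vector $v$ and $n\neq0$. Then
\[\|L_nv\|^2=\<v,\theta(L_n)L_nv\>=-\al^n\<v,L_n^2v\>=0,\]
because $L_n^2v$ has weight shifted by $2n$ and is therefore orthogonal to $v$.
\item Hence every $L_n$ with $n\ne0$ acts as zero. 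The brackets $[L_1,L_{-1}]$ and $[L_2,L_{-2}]$ then force $L_0$ and $C_0$ to act as zero, so $V$ is trivial, a contradiction.
\end{itemize}

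\emph{Type $\theta^+_\al$.} Now $L_0$ is self-adjoint, so weights are real and weight spaces are orthogonal. For $\be\in\R_{>0}$, the map $\sigma_\be(L_n)=\be^nL_n$, $\sigma_\be(C_0)=C_0$, is an automorphism of $\V$, and one checks
\[\sigma_\be^{-1}\circ\theta^+_\al\circ\sigma_\be=\theta^+_{\al\be^{-2}}.\]
So twisting the module by $\sigma_\be$ with $\be=\sqrt{|\al|}$ turns unitarity for $\theta^+_\al$ into unitarity for $\theta^+_{\pm1}$, with the sign of $\al$ kept. For $\al>0$ this gives unitarity for $\theta^+_1$. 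To get (2) for $V$ itself rather than for a twist, I will exhibit the new form directly on $V$: put $\<u,w\>'=\be^{k}\<u,w\>$ on the weight space of weight $h+k$, keeping distinct weight spaces orthogonal. A direct check of the adjointness relation for $\theta^+_1$ should confirm this.

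\emph{Excluding $\al<0$.} It remains to rule out $\theta=\theta^+_{-1}$, i.e. $\theta(L_n)=(-1)^nL_{-n}$. For each $n\in\Z_+$ put
\[e=L_n,\qquad f=-L_{-n}\ (\text{up to sign}),\qquad h_n=\tfrac1{n}\Big(L_0+\tfrac{n^2-1}{24}C_0\Big),\]
which span a copy of $\mathfrak{sl}_2$. For odd $n$ the anti-involution restricts to $e\mapsto -f$, $h_n\mapsto h_n$; this is the compact real form $\mathfrak{su}(2)$, not $\mathfrak{su}(1,1)$.
\begin{itemize}
\item A positive definite invariant form makes $V$ a unitary $\mathfrak{su}(2)$-module. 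Since weight spaces are finite-dimensional and orthogonal, every vector should lie in a finite-dimensional $\mathfrak{sl}_2$-submodule. To see this, I would use $\|e^kv\|^2=\prod(\text{explicit factors})\cdot\|v\|^2$, obtained from the commutation relations and positivity; these factors eventually turn negative unless $e^kv=0$.
\item Consequently each $h_n$ acts with half-integral eigenvalues, symmetric under $\lmd\mapsto-\lmd$ within each $L_n$-string, and $L_n$, $L_{-n}$ are locally nilpotent.
\item Applying this to all odd $n$, the $L_0$-spectrum of $V$ must be symmetric about $-\frac{n^2-1}{24}c$ for every odd $n$ simultaneously, where $c$ is the scalar by which $C_0$ acts. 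This forces $c=0$.
\item With $c=0$, the weights lie in $\frac n2\Z$ for all odd $n$, and the strings are bounded. This should force $V$ to be finite-dimensional, hence trivial, since $\V$ has no nontrivial finite-dimensional modules.
\end{itemize}

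The main obstacle is this last step. The delicate points are justifying local finiteness of the $\mathfrak{su}(2)$-actions from positivity in an infinite-dimensional weight module, and combining the constraints for different $n$ into a contradiction. As a fallback, for intermediate series modules one can use the explicit actions together with the norm identity
\[2h\|v\|^2=\|L_1v\|^2-\|L_{-1}v\|^2\]
and its $L_{\pm2}$-analogue, summed along weights, to reach a sign contradiction.
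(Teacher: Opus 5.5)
The paper gives no proof of this proposition (it is quoted from \cite{CP1988}), so your argument has to stand on its own. Your treatment of $\theta^-_\al$ is correct and complete. The rescaling idea for (2) is also right, but the formulas need fixing. In fact $\sigma_\be^{-1}\circ\theta^+_\al\circ\sigma_\be=\theta^+_{\al\be^2}$. If you write the new form as $c_k\<u,w\>$ on $V_{h+k}$, adjointness for $\theta^+_1$ forces $c_{k-n}\al^n=c_k$, so $c_k=\al^k$, not $(\sqrt\al)^k$. In the same way, $c_k=|\al|^k$ reduces $\al<0$ to $\theta^+_{-1}$.

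The genuine gap is the exclusion of $\theta^+_{-1}$, and it has two holes.

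\textbf{Local finiteness is asserted, not proved.} Take $n$ odd and set $e=\frac1nL_n$, $f=-\frac1nL_{-n}$, $h=[e,f]=-\frac2n\big(L_0+\frac{n^2-1}{24}C_0\big)$, so that $e^\dagger=f$. The tool you name does not work: an identity $\|e^kv\|^2=\prod(\cdots)\|v\|^2$ holds when $fv=0$, or when $v$ is an eigenvector of the Casimir $\Omega_n=ef+fe+\frac12h^2$, but not for an arbitrary weight vector. The missing step is this. $\Omega_n$ is Hermitian and commutes with $L_0$, so if $\dim V_\lmd<\infty$ it diagonalizes on each $V_\lmd$. For a joint eigenvector with $\Omega_nv=\omega v$ and $hv=\mu v$ one gets $\|e^{k+1}v\|^2=\frac12\big(\omega-(\mu+2k)-\frac12(\mu+2k)^2\big)\|e^kv\|^2$. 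This forces $e^kv=0$, and likewise $f^kv=0$, for large $k$. The argument genuinely needs finite-dimensional weight spaces, which the statement (``weight module'') does not grant. For $\mathfrak{su}(2)$ alone, local finiteness can fail on a pre-Hilbert weight module with infinite-dimensional weight spaces. So you must either add the Harish-Chandra hypothesis or find an argument that avoids local finiteness.

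\textbf{Symmetry does not force $c=0$.} Symmetry of the spectrum about $0$ and about $-\frac c3$ only makes the support periodic with period $\frac{2c}3$. What closes the argument is integrality of the eigenvalues of $h$, and $n=3$ alone suffices. It gives $\lmd+\frac c3\in\frac32\Z$ for every weight $\lmd$. Since all weights lie in one coset of $\Z$, any two of them differ by an element of $3\Z$. Hence $L_{\pm1}$ and $L_{\pm2}$ act by zero. Then $L_0=\frac12[L_1,L_{-1}]$ and $C_0=2[L_2,L_{-2}]-8L_0$ act by zero. Because $L_{\pm1},L_{\pm2}$ generate $\V$, the module $V$ is trivial.

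Your intermediate-series fallback covers only one class of modules and does not close the gap.
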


\begin{proposition}\label{prop:unitaryVmod}
 (1) The $\V$-module $V(a,b)$ is unitary for $\theta_1^+$ if and only if $a\in\R,b\in\frac12+\sqrt{-1}\R$.\\
 (2) The irreducible highest weight module of highest weight $\phi$ over $\V$ is unitary for $\theta_1^+$ if and only if $\phi(C_0)\ge1,\phi(L_0)\ge0$, or there exists an integer $m\ge2$ and $r,s\in\Z$ such that
 \[0\le r<s<m,\quad \phi(C_0)=1-\frac6{m(m+1)},\quad \phi(L_0)=\frac{(mr+s)^2-1}{4m(m+1)}.\]
\end{proposition}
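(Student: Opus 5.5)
Both parts are the classical results of \cite{CP1988} together with the Friedan--Qiu--Shenker and Goddard--Kent--Olive theorems. I would reproduce the argument in two independent parts, using throughout that $\theta_1^+(L_n)=L_{-n}$ and $\theta_1^+(C_0)=C_0$.

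For (1), I first note that $L_0$ is self-adjoint, so its eigenvalues $-(a+k)$ must be real; hence $a\in\R$. Distinct weight spaces are then orthogonal, so any invariant form is diagonal: $\<v_k,v_l\>=c_k\dt_{k,l}$ with all $c_k>0$. The invariance condition $\<L_nv_k,v_{n+k}\>=\<v_k,L_{-n}v_{n+k}\>$ becomes
\[(a+k+bn)\,c_{n+k}=(a+k+n-\ov b n)\,c_k\qquad\text{for all } n,k\in\Z .\]
For sufficiency, if $b+\ov b=1$ the two coefficients coincide, so $c_k=1$ for all $k$ works. For necessity, I put $n=\pm1$ to get two expressions for the ratio $c_{k+1}/c_k$. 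Comparing these over all $k$ (or combining $n=1$ twice with $n=2$) gives a polynomial identity in $k$, whose coefficients force $\mathrm{Re}\,b=\frac12$. One care point is that $a+k+b$ could vanish for some $k$. I would treat this separately: since $a$ is real, this happens only when $b$ is real. Then positivity of the ratio $(a+k+1-b)/(a+k+b)$ for all $k$ is incompatible with $b\neq\frac12$, because some factor changes sign. The reducible, trivial-action cases are excluded here by the standing nontriviality assumption.

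For (2), I use the Shapovalov form. The Verma module $\mvphi$ carries a unique contravariant Hermitian form with $\<\vvf,\vvf\>=1$, provided $h=\phi(L_0)$ and $c=\phi(C_0)$ are real (self-adjointness of $L_0$ and $C_0$ forces this). Its radical is the maximal submodule, so the irreducible quotient is unitary if and only if the Gram matrix on each weight space $(\mvphi)_{h+n}$ is positive semidefinite. I would then argue as follows.

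\begin{enumerate}
\item[(a)] \emph{Necessity of $h\ge0$.} Level $1$ gives $\<L_{-1}\vvf,L_{-1}\vvf\>=2h$, so $h\ge0$.
\item[(b)] \emph{Necessity of $c\ge0$.} At level $n$, $\<L_{-n}\vvf,L_{-n}\vvf\>=2nh+\frac{c}{12}(n^3-n)$; letting $n\to\infty$ gives $c\ge0$.
\item[(c)] \emph{Sufficiency for $c\ge1$, $h\ge0$.} The Kac determinant is $\prod_{\al\be\le n}\Phi_{h,c}(\al,\be)^{P(n-\al\be)}$, and I would show it does not vanish on the open region $c>1$, $h>0$. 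That region is connected, and for $h\gg0$ the form is positive definite because the diagonal terms dominate. Hence positivity holds on the whole open region, and by continuity on its closure.
\item[(d)] \emph{Necessity for $0\le c<1$.} This is the Friedan--Qiu--Shenker analysis. In the $(c,h)$ plane with $0\le c<1$, the vanishing curves $\Phi_{h,c}(\al,\be)=0$ accumulate toward $c=1$. Off the points $c=1-\frac6{m(m+1)}$, $h=h_{r,s}$, one can find a level at which a single factor of the determinant changes sign to odd order as $(c,h)$ is moved across a curve from a region already known to be positive. That produces a negative-norm vector.
\item[(e)] \emph{Sufficiency on the discrete series.} The points $c=1-\frac6{m(m+1)}$, $h=h_{r,s}$ (as parametrised in the statement) are realised by the Goddard--Kent--Olive coset construction inside tensor products of unitary integrable $\widehat{\mathfrak{sl}}_2$-modules \cite{CP1986}. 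Unitarity is then inherited from those modules.
\end{enumerate}

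The main obstacle is step (d). The careful sign-change bookkeeping for the Kac determinant near $c=1$ is genuinely delicate, and I would largely follow FQS rather than reprove it.
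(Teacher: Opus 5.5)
The paper gives no proof of this proposition; it is quoted from \cite{CP1988}. For part (2) your outline is the standard route and is adequate at that level of detail: the Shapovalov form, the norms of $L_{-1}v_\phi$ and $L_{-n}v_\phi$, the connectedness argument on $c>1$, $h>0$, then Friedan--Qiu--Shenker for necessity and the Goddard--Kent--Olive coset construction for the discrete series. The genuine gap is in the elementary part (1), in the case of real $b$.

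Comparing $n=1$ with $n=-1$ gives nothing new. The $n=-1$ relation at index $k+1$ is just the complex conjugate of the $n=1$ relation at index $k$. Equating the two expressions for $c_{k+1}/c_k$ gives, with $t=a+k$, $(t+1-\ov b)(t+\ov b)=(t+1-b)(t+b)$. This is the $k$-independent condition $(b-\ov b)(1-b-\ov b)=0$. So \emph{every} real $b$ survives, not only those with $a+k+b=0$ for some $k$. Your fallback claim, that positivity of $(a+k+1-b)/(a+k+b)$ for all $k$ forces $b=\frac12$, is false. For $a=0$, $b=\frac13$, every ratio $(k+\frac23)/(k+\frac13)$ with $k\in\Z$ is positive, yet $V(0,\frac13)$ is not unitary.

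What closes the argument is your parenthetical route, carried out correctly. Cross-multiply $n=1$ (applied twice) against $n=2$; no division is needed since $c_k>0$. This gives
\[(t+1-\ov b)(t+2-\ov b)(t+2b)=(t+b)(t+1+b)(t+2-2\ov b)\qquad\text{for all }t\in a+\Z.\]
Its linear and constant coefficients yield $(b-\ov b)(1-b-\ov b)=0$ and $b(1-\ov b)(1-b-\ov b)=0$. Hence $\mathrm{Re}\,b=\frac12$ unless $b\in\{0,1\}$, and only these two values need a positivity argument. For $b=0$, the relation $(a+k)c_{k+1}=(a+k+1)c_k$ forces $c_k\propto a+k$ if $a\notin\Z$, and $c_{-a}=0$ if $a\in\Z$; the case $b=1$ is analogous. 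There is no nontriviality hypothesis in (1): the reducible $V(a,0)$ and $V(a,1)$ with $a\in\Z$ are nontrivial and are excluded by exactly this computation.

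Two smaller points in (2):
\begin{enumerate}
\item Since $\Phi_{h,c}(\al,\be)=(h-h_{\al,\be}(c))(h-h_{\be,\al}(c))$ with the usual Kac roots $h_{\al,\be}(c)$, your product over ordered pairs is the \emph{square} of the Kac determinant. That is harmless for the nonvanishing in (c), but it would kill the odd-order sign changes you need in (d), which must use $\prod_{rs\le n}(h-h_{r,s}(c))^{P(n-rs)}$.
\item \cite{CP1986} is not the source of the coset construction. That is Goddard--Kent--Olive, together with Kac's unitarity of integrable $\widehat{\mathfrak{sl}}_2$-modules.
\end{enumerate}
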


\section{Highest weight modules}\label{sec3}

In this section we determine the irreducibility of the Verma modules and structures of irreducible highest weight modules over $\g$.

Recall the Verma $\g$-module $\mgvf$ of highest weight $\vf\in\h^*$.
Denote
\[J=\{1\le i\le p-1\mid \vf(C_i)\neq0\}\quad\text{and}\quad
    \jc=\{1\le i\le p-1\mid \vf(C_i)=0\}.\]
Since $C_i=C_{p-i}$, the set $J$ is \textbf{symmetrical} in the sense that $p-i\in J$ if $i\in J$. The set $\jc$ is also symmetrical.
Then we have subalgebras
$$\gjc=\spanc{L_n,\elei ni, C_0,C_i\mid n\in\Z,i\in\jc}\text{ and }
  \hj=\spanc{\elei ni, C_i\mid n\in\Z,i\in J}$$
of $\g$.
Clearly, $\hj$ is an ideal of $\g, \gjc\ltimes\hj=\g$,
and $\hj=\H,\gjc=\V$ if $\jc=\emptyset$.
Moreover, they have triangular decompositions induced from $\g$.
Define a linear function $\psi\in\(\h\cap\gjc\)^*$ by
\begin{equation}\label{psi}
  \psi(L_0)=\vf(L_0)-\sum_{j\in J}\frac{j(p-j)}{4p^2};\quad
  \psi(C_0)=\vf(C_0)-|J|; \quad \psi(C_i)=0\quad\text{ for }i\in\jc.
\end{equation}
Then we have the Verma module $\mgjcpsi$ of $\gjc$ with highest weight $\psi$.
Clearly, if $\jc\neq \emptyset$,
then $\U\(\H_{\{i,p-i\}}\)v_\psi$ generates a proper $\g$-submodule of $\mgjcpsi$,
where $i\in\jc$,
\[\H_{\{i,p-i\}}=\bigoplus_{n\in\Z}\C\elei ni\oplus\C\elei n{p-i},\]
and $v_\psi$ is a highest weight vector in $\mgjcpsi$.
So $\mgjcpsi$ is reducible.
Denote by $\psi\mid_\V$ the restriction of $\psi$ to $\h\cap\V$.
Since any highest weight $\V$-module of highest weight $\psi\mid_\V$ can be extended to a highest weight $\gjc$-module of highest weight $\psi$ by requiring that $\elei ni$ and $C_i$ act trivially for all $n\in\Z$ and $i\in\jc$,
we see that the irreducible quotient of $\mgjcpsi$ is equivalent to the irreducible quotient of the Verma $\V$-module $M_{\V,\psi\mid_\V}$.

By demanding $\hj\mgjcpsi=0$, we turn $\mgjcpsi$ into a $\g$-module,
and should denote the resulting $\g$-module by $\mgjcpsie$.

Set $\phi=\vf\mid_{\hj}$ the restriction of $\vf$ to $\h\cap\hj$.
We have the Verma module $\mhjphi$ of $\hj$ with highest weight $\phi$,
which one can easily prove is irreducible.
The following result is a special case of \cite[Proposition 3.1]{XCT}.

\begin{proposition}\label{gmodext}
 There is an irreducible highest weight $\g$-module structure on the space $\mhjphi$ with actions given by
 \begin{align}\label{eq:Iaction}
  \elei ni &\mapsto \begin{cases} \elei ni\ &\text{if}\ i\in J,\\
                                        0\ &\text{if}\ i\notin J,\end{cases}\ \ \ \ \quad
                   C_i\mapsto \begin{cases} \phi(C_i)\ &\text{if}\ i\in J,\\
                                   0\ &\text{if}\ i\notin J,\end{cases},\quad C_0\mapsto |J|,\\
 \label{eq:Lnaction}
    L_n   &\mapsto \sum_{j\in J}\frac{1}{2\phi(C_j)}\left(\sum_{k\in \Z} :
                    \elei kj\elei{n-k-1}{p-j}:\right)
                   +\dt_{n,0}\sum_{j\in I}\frac{j(p-j)}{4p^2},
 \end{align}
 where $n\in\Z, 1\leq i\leq p-1$ and the normal ordered product is defined by
\begin{equation*}
   :\elei ki\elei l{p-i}:=\begin{cases}
      \elei ki\elei l{p-i}\quad&\text{ if }k<l;\\
       \elei l{p-i}\elei ki \quad&\text{ if }k\ge l.
   \end{cases}
\end{equation*}
\end{proposition}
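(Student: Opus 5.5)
The plan is to verify directly that the assignments \eqref{eq:Iaction} and \eqref{eq:Lnaction} satisfy the bracket relations of $\g$ on $\mhjphi$. Then we check irreducibility and the highest weight property. This is a Sugawara-type (free boson) construction, one boson pair $\{\elei kj,\elei k{p-j}\}$ for each symmetric pair $\{j,p-j\}\subseteq J$. The constant in $L_0$ should read $\sum_{j\in J}\frac{j(p-j)}{4p^2}$, matching \eqref{psi}.

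\emph{Well-definedness.} In $\mhjphi\cong\U(\hj\cap\g_-)$, any fixed vector is killed by $\elei ki$ for $k\ge0$ large. Hence for each $n$ the normally ordered sum applied to a vector has only finitely many nonzero terms, and the operator $\theta(L_n)$ is well defined.

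\emph{Brackets with the Heisenberg part.} Write $\theta(L_n)$ for the operator \eqref{eq:Lnaction}. We first compute $[\theta(L_m),\elei ni]$ for $i\in J$. Normal ordering changes $\theta(L_m)$ only by scalars, so we may commute term by term using $[\elei ki,\elei l{p-i}]=(k+\frac ip)\dt_{k+l+1,0}\phi(C_i)$. In $\theta(L_m)$, the summands indexed by $j=i$ and by $j=p-i$ each contribute. The term from $j=p-i$ pairs $\elei{m-k-1}i$ with $\elei ni$, and the factor $\frac1{2\phi(C_j)}$ together with the two contributions yields $-(n+\frac ip)\elei{m+n}i$. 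The sign and shift must be checked carefully against the convention $[\elei mi,\elei nj]=(m+\frac ip)\dt_{m+n+1,0}C$. If $i\notin J$, both sides are zero, since $\elei{m+n}i$ acts trivially. Brackets among the $\elei ni$ and with the central elements are immediate from the definitions.

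\emph{Virasoro relations.} Using the previous step, $[\theta(L_m),\theta(L_n)]$ is computed by commuting $\theta(L_m)$ past each factor of the quadratic expression for $\theta(L_n)$. This gives $(m-n)\theta(L_{m+n})$ up to a central term, which arises only when $m+n=0$ from re-normal-ordering. We compute it by evaluating $[\theta(L_m),\theta(L_{-m})]$ on the highest weight vector $v_\phi$, for $m>0$. This reduces to a finite sum, for each $j\in J$, of the form
\[\frac1{\phi(C_j)\phi(C_{p-j})}\sum_{k=0}^{m-1}\(k+\tfrac jp\)\(m-k-1+\tfrac{p-j}p\)\phi(C_j)^2\]
(up to normalization). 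Matching it with $2m\,\theta(L_0)v_\phi+\frac{m^3-m}{12}|J|v_\phi$ forces both the value $C_0\mapsto|J|$ and the shift $\frac{j(p-j)}{4p^2}$. This computation is the main obstacle. I would do it pair by pair, treating $j$ and $p-j$ together so that the factor $\frac12$ cancels against the double counting, and also treating the self-paired case $j=p/2$ when $p$ is even. The check that the $L_0$ constant is exactly $\frac{j(p-j)}{4p^2}$ is the standard twisted-boson computation $\sum_{k=0}^{m-1}(k+a)(m-k-a)=\frac{m^3-m}6+ma(1-a)$ with $a=\frac jp$.

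\emph{Highest weight and irreducibility.} The vector $v_\phi$ is annihilated by $\g_+$: this is clear for $\elei ni$ with $n\ge0$, and $\theta(L_n)v_\phi=0$ for $n>0$ since every normally ordered term ends in an annihilator. It has the weight $\vf$ (up to the identification of $\vf$ on $\h$). Irreducibility as a $\g$-module follows because $\mhjphi$ is already irreducible over $\hj$, and the $\hj$-action is part of the $\g$-action.
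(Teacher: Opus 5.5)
Your proposal is correct, but it takes a different route from the paper. The paper gives no argument here: it cites the statement as a special case of \cite[Proposition 3.1]{XCT}, a general result that puts a $\g$-action, via these Sugawara operators, on suitable nonzero-level $\hj$-modules, not only on the Fock module $\mhjphi$. You instead verify the free-field construction directly, and the computation checks out:
\begin{itemize}
\item The summands $j=i$ and $j=p-i$ (or the single summand $j=p/2$, which contributes twice) give $[\theta(L_m),\elei ni]=-(n+\frac ip)\elei{m+n}i$.
\item A pair $\{j,p-j\}$ gives $[\theta(L_m),\theta(L_{-m})]v_\phi=\bigl(\frac{m^3-m}6+\frac{mj(p-j)}{p^2}\bigr)v_\phi$, and $j=p/2$ gives $\bigl(\frac{m^3-m}{12}+\frac m8\bigr)v_\phi$.
\item These force exactly $C_0\mapsto|J|$ and the shift $\sum_{j\in J}\frac{j(p-j)}{4p^2}$. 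You are right that $I$ in the statement should be $J$.
\end{itemize}
The citation buys brevity and generality. Your route is self-contained and explains why these constants are forced.

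A few points to tighten:
\begin{itemize}
\item Set $D=[\theta(L_m),\theta(L_n)]-(m-n)\theta(L_{m+n})$. Justify that $D$ is a scalar without re-normal-ordering infinite sums. By the Jacobi identity for operators together with your first step, $D$ commutes with every $\elei ki$. Hence $Dv_\phi$ is killed by $\hj\cap\g_+$, so it lies in $\C v_\phi$. Therefore $D$ is a scalar, and by degree it vanishes unless $m+n=0$.
\item The highest weight is not $\vf$. It agrees with $\vf$ only on $\h\cap\hj$, while $L_0$ and $C_0$ act on $v_\phi$ by $\sum_{j\in J}\frac{j(p-j)}{4p^2}$ and $|J|$. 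This is harmless, since the proposition only claims a highest weight module.
\item In the $j=p-i$ summand, it is $\elei k{p-i}$, not $\elei{m-k-1}i$, that pairs with $\elei ni$.
\end{itemize}
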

We denote by $\mhjphig$ the resulting $\g$-module from Proposition \ref{gmodext}.
Moreover, we have a characterization of the Verma module $\mgvf$ of $\g$.
\begin{theorem}\label{thm:irrgmod}
 (1) $\mgvf\cong\mhjphig\otimes\mgjcpsie$ if $J\neq\emptyset$.\\
 (2) Any submodule of $\mhjphig\otimes\mgjcpsie$ is of the form $\mhjphig\otimes W^e$,
 where $W$ is a $\gjc$-submodule of $\mgjcpsi$ and $W^e$ is the $\g$-module obtained from $W$ in the same fashion as $\mgjcpsie$.
 Then the irreducible highest weight $\g$-module of highest weight $\vf$ is isomorphic to $\mhjphig\otimes \(V_{\V,\psi\mid_\V}\)^e$ if $J\neq\emptyset$,
 and isomorphic to $\(V_{\V,\psi\mid_\V}\)^e$ if $J=\emptyset$,
 where $V_{\V,\psi\mid_\V}$ is the unique irreducible quotient of the Verma $\V$-module $M_{\V,\psi\mid_\V}$
 (i.e., the irreducible highest weight $\V$-module of highest weight $\psi\mid_\V$),
 and $\(V_{\V,\psi\mid_\V}\)^e$ is the $\g$-module obtained from $V_{\V,\psi\mid_\V}$ in the same fashion as $\mgjcpsie$.\\
 (3) The Verma module $\mgvf$ is irreducible if and only if $J=\{1,2,\dots,p-1\}$ and
 \[\Phi(\al,\be)\Phi(\be,\al)+(\al^2-\be^2)^2=0\text{ for some }
  \al,\be\in\Z_+,\]
 where
 \[\Phi(\al,\be)=4\vf(L_0)-\sum_{j=1}^{p-1}\frac{j(p-j)}{p^2}+\frac{(\al^2-1)(\vf(C_0)-p-12)}6
     +2(\al\be-1).\]
\end{theorem}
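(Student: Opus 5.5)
The plan is to realize the tensor product as a highest weight module with highest weight $\vf$ and compare it with $\mgvf$ by a universal-property-plus-character argument. After that, I reduce the submodule question to the Heisenberg factor, where everything is controlled by its vacuum vector. Part (3) then follows from (1), (2) and Proposition \ref{irrVmod}.

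\emph{Part (1).} Let $w\in\mhjphi$ and $v_\psi\in\mgjcpsi$ be the highest weight vectors, and consider $w\otimes v_\psi$.
\begin{itemize}
\item It is annihilated by $\g_+$. Indeed, $\hj\cap\g_+$ kills $w$ and acts trivially on the second factor. The elements of $\gjc\cap\g_+$ kill $v_\psi$. In Proposition \ref{gmodext}, $L_n$ with $n>0$ acts on $w$ through normal ordered products whose right-hand factor lies in $\hj\cap\g_+$, so it kills $w$.
\item It has weight $\vf$. By Proposition \ref{gmodext}, $L_0$ acts on $w$ by $\sum_{j\in J}\frac{j(p-j)}{4p^2}$ (I read the index set $I$ there as $J$) and $C_0$ acts on $w$ by $|J|$. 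Adding $\psi$ from (\ref{psi}) gives exactly $\vf(L_0)$ and $\vf(C_0)$. Moreover $C_i$ acts by $\vf(C_i)$ for $i\in J$ and by $0=\vf(C_i)$ for $i\in\jc$.
\end{itemize}
Hence there is a homomorphism $\mgvf\to\mhjphig\otimes\mgjcpsie$ with $v_\vf\mapsto w\otimes v_\psi$.

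For surjectivity, note that $\U(\hj)$ acts only on the first factor, so $\U(\hj)(w\otimes u)=\mhjphi\otimes u$. For $x\in\gjc$ we have $x(a\otimes u)-(xa)\otimes u=a\otimes xu$. An induction on the degree of $u\in\mgjcpsi$ then shows that $w\otimes v_\psi$ generates everything. Finally, by PBW $\U(\g_-)\cong\U(\hj\cap\g_-)\otimes\U(\gjc\cap\g_-)$ as graded spaces, so both modules have the same (finite) weight-space dimensions. A surjection between them is therefore an isomorphism.

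\emph{Part (2).} Let $N$ be a submodule of the tensor product. The key fact is that, since $\phi(C_j)\ne0$ for $j\in J$, the Heisenberg Verma module $\mhjphi$ is irreducible and its space of $(\hj\cap\g_+)$-invariants is $\C w$. Take any nonzero $\sum_k a_k\otimes u_k\in N$ with the $u_k$ linearly independent. Applying suitable elements of $\U(\hj\cap\g_+)$, which act only on the left factor, gives a nonzero element $w\otimes u\in N$. Repeating this for each component shows that $N=\mhjphi\otimes W$ with $W=\{u\mid w\otimes u\in N\}$. The identity $a\otimes xu=x(a\otimes u)-(xa)\otimes u$ shows that $W$ is $\gjc$-stable.

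The irreducible quotient of $\mgvf$ is therefore $\mhjphig\otimes(\text{irreducible quotient of }\mgjcpsi)^e$. By the remark preceding Proposition \ref{gmodext}, the irreducible quotient of $\mgjcpsi$ is $V_{\V,\psi\mid_\V}$ extended trivially. When $J=\emptyset$ the same remark applies directly, since then $\gjc=\g$ and $\psi=\vf$.

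\emph{Part (3).} Suppose $\jc\ne\emptyset$. If $J\ne\emptyset$, then $\mgjcpsi$ is reducible as observed above, so $\mgvf$ is reducible by (1) and (2). If $J=\emptyset$, then $\mgvf=\mgjcpsi$ with $\gjc=\g$, and it is reducible by the same observation.

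Suppose instead $J=\{1,\dots,p-1\}$. Then $\gjc=\V$, and $\mgvf$ is irreducible iff $M_{\V,\psi}$ is irreducible, with
$$h=\vf(L_0)-\sum_{j=1}^{p-1}\frac{j(p-j)}{4p^2},\qquad c=\vf(C_0)-(p-1).$$
Then $c-13=\vf(C_0)-p-12$. Multiplying each factor of $\Phi_{h,c}(\al,\be)$ by $4$ turns it into $\Phi(\al,\be)$ and $\Phi(\be,\al)$, and $16\cdot\frac{(\al^2-\be^2)^2}{16}=(\al^2-\be^2)^2$. Proposition \ref{irrVmod} then gives the stated criterion, in exactly the form that proposition is stated.

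The main obstacle I expect is the invariant-vector step in (2): showing that the $(\hj\cap\g_+)$-invariants of $\mhjphi$ are exactly $\C w$, so that any nonzero element of $N$ can be pushed down to some $w\otimes u$. I would prove it by viewing $\mhjphi$ as a polynomial ring on the generators of $\hj\cap\g_-$. Because $\phi(C_j)\ne0$, each generator of $\hj\cap\g_+$ acts as a nonzero multiple of the partial derivative with respect to its paired generator. The only polynomials killed by all these derivatives are constants.
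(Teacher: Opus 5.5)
Your proof follows the paper's route for (1) and (3). The paper compresses (1) into ``$w\otimes v_\psi$ is a highest weight vector of weight $\vf$, then PBW''. That is exactly what you carry out, including the surjectivity induction via $a\otimes xu=x(a\otimes u)-(xa)\otimes u$. Your reduction of (3) to $M_{\V,\psi}$ is also the paper's: $h=\vf(L_0)-\sum_{j}\frac{j(p-j)}{4p^2}$, $c=\vf(C_0)-p+1$, and $16\,\Phi_{h,c}(\al,\be)=\Phi(\al,\be)\Phi(\be,\al)+(\al^2-\be^2)^2$.

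For (2) the paper just cites \cite[Proposition 3.5]{XCT}, while you prove it directly. Because $\phi(C_i)\neq0$ for $i\in J$, each $I^{(i)}_m$ with $m\ge0$, $i\in J$, acts on the polynomial model of $\mhjphi$ as $(m+\frac ip)\phi(C_i)\,\partial/\partial I^{(p-i)}_{-m-1}$. Hence the $(\hj\cap\g_+)$-invariants of $\mhjphi$ are $\C w$. This is correct, self-contained, and shows exactly where $\vf(C_j)\neq0$ is used.

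Two details should be made precise:
\begin{enumerate}
\item For ``applying suitable elements'', take $y\in\U(\hj\cap\g_+)$ homogeneous of degree equal to the top depth $D$ among the components of the $a_k$. It kills the lower-depth components and sends the depth-$D$ ones into $\C w$. To get $N=\mhjphi\otimes W$, apply this to the part of an element of $N$ lying in $\mhjphi\otimes W'$, where $W'$ is a complement of $W$.
\item For reducibility of $\mgjcpsi$ when $\jc\neq\emptyset$, exhibit the singular vector $I^{(i)}_{-1}v_\psi$ with $i\in\jc$. It is killed by $\gjc\cap\g_+$ because $\psi(C_i)=0$. The paper's remark about $\U(\H_{\{i,p-i\}})v_\psi$ is imprecise as written, since that set contains $v_\psi$.
\end{enumerate}

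The real problem is the last step of (3), which you inherit verbatim: Proposition~\ref{irrVmod} has the criterion inverted. By the Kac determinant formula, $\Phi_{h,c}(\al,\be)=0$ for some $\al,\be\in\Z_+$ is the condition for $M_{\V,\phi}$ to be \emph{reducible}. For instance, $h=0$ gives $\Phi_{0,c}(1,1)=0$, while $L_{-1}v_\phi$ is singular.

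Hence (3) as printed is false. Take $J=\{1,\dots,p-1\}$ and $\vf(L_0)=\sum_{j=1}^{p-1}\frac{j(p-j)}{4p^2}$. Then $\Phi(1,1)=0$, yet by your part (2) the space $\mhjphig\otimes(\U(\V)L_{-1}v_\psi)^e$ is a proper nonzero submodule of $\mgvf$.

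With the correct Virasoro criterion, your reduction proves the following: $\mgvf$ is irreducible if and only if $J=\{1,\dots,p-1\}$ and $\Phi(\al,\be)\Phi(\be,\al)+(\al^2-\be^2)^2\neq0$ for all $\al,\be\in\Z_+$. The paper's proof has the same defect. Your phrase ``in exactly the form that proposition is stated'' suggests you noticed the odd form, so you should have stated the corrected criterion explicitly rather than transporting the error.
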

\begin{proof}
 One can check that $\mhjphig\otimes\mgjcpsie$ is a highest weight $\g$-module of highest weight $\vf$.
 Then (1) follows from the PBW theorem.
 (2) follows from (1) and \cite[Proposition 3.5]{XCT},
 and (3) follows from (2) and Proposition \ref{irrVmod}.
\end{proof}

\section{Conjugate-linear anti-involutions of $\g$}

In this section we determine all conjugate-linear anti-involutions of $\g$.
Denote 
$$\cc=\spanc{C_i\mid 0\le i\le p-1}$$
the center of $\g$.
Then $\h=\C L_0\oplus\cc$.

\begin{lemma}\label{lem:invg}
 Let $\theta$ be a conjugate-linear anti-involution of $\g$. Then\\
 (1) $\theta(\cc)=\cc,\theta(\h)=\h$ and $\theta(L_0)=\lmd L_0+C(L_0)$, where $\lmd=\pm1$ and $C(L_0)\in\cc$.\\
 (2) $\theta(C_i)=\lmd  A_{0,i} A_{-1,p-i}C_i$ and
 \begin{equation}\label{value:ini}
   \theta(\elei ni)=\begin{cases}
       A_{-n-1,p-i}\elei{-n-1}{p-i} &\text{if }\lmd=1,\\
       A_{n,i}\elei ni &\text{if }\lmd=-1,
   \end{cases}
 \end{equation}
 where $n\in\Z, 1\le i\le p-1$ and all $ A_{j,k}$ are nonzero complex numbers.
 In particular, $\theta(\H)=\H$.
\end{lemma}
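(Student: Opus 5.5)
Throughout, I use that a conjugate-linear anti-involution $\theta$ is bijective and satisfies $\theta([x,y])=[\theta(y),\theta(x)]$. Hence it maps the center to the center and ideals to ideals, and it carries $\mathrm{ad}$-semisimple elements to $\mathrm{ad}$-semisimple elements. The last point holds because $\mathrm{ad}\,\theta(x)=-\theta\circ\mathrm{ad}\,x\circ\theta^{-1}$, so an eigenvector $y$ of $\mathrm{ad}\,x$ with eigenvalue $c$ gives an eigenvector $\theta(y)$ of $\mathrm{ad}\,\theta(x)$ with eigenvalue $-\ov c$.

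\textbf{Step 1.} Since $\cc$ is the center of $\g$, we get $\theta(\cc)=\cc$. The paper notes that $\h$ exhausts the semisimple elements, so $\theta(\h)=\h$. Write $\theta(L_0)=\lmd L_0+C(L_0)$ with $C(L_0)\in\cc$; here $\lmd\neq0$ because $\theta$ is bijective and $\theta(\cc)=\cc$. The eigenvalues of $\mathrm{ad}\,L_0$ are $-m$ on $L_m$ and $n+\frac ip$ on $\elei ni$, all real. So $\theta$ maps the $c$-eigenspace of $\mathrm{ad}\,L_0$ to the $-c$-eigenspace of $\mathrm{ad}\,\theta(L_0)=\lmd\,\mathrm{ad}\,L_0$, which is the $-c/\lmd$-eigenspace of $\mathrm{ad}\,L_0$. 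The set of eigenvalues is $\frac1p\Z$, and this set must be preserved by multiplication by $-1/\lmd$. Moreover, the eigenspaces have dimension $1$ at non-integer eigenvalues and contain $L_{-c}$ at integer ones. From this, $\lmd$ must be real and the map $c\mapsto -c/\lmd$ is a bijection of $\frac1p\Z$, which forces $\lmd=\pm1$. This argument uses only that the scaling is a bijection of a lattice. Applying $\theta^2=\mathrm{id}$ is consistent with both signs.

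\textbf{Step 2.} The ideal $\H$ is the span of the $\mathrm{ad}\,L_0$-eigenspaces with non-integer eigenvalues, together with $\cc\cap\H$. One can also characterize it intrinsically, which is cleaner: $\H$ is the unique maximal nilpotent ideal, or equivalently $[\g,\g]\cap$ (the sum of the non-integer eigenspaces plus the center). Since $\theta$ preserves the eigenvalue structure up to $c\mapsto -\lmd c$, it maps the non-integer eigenspaces to non-integer eigenspaces. The eigenspace of eigenvalue $n+\frac ip$ is $\C\elei ni$. Under $\lmd=1$ it goes to eigenvalue $-n-\frac ip=(-n-1)+\frac{p-i}p$, so $\theta(\elei ni)=A_{-n-1,p-i}\elei{-n-1}{p-i}$. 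Under $\lmd=-1$ it goes to the same eigenvalue, so $\theta(\elei ni)=A_{n,i}\elei ni$. All the constants are nonzero by bijectivity. This gives \eqref{value:ini} and $\theta(\H)=\H$.

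\textbf{Step 3.} For the central elements, apply $\theta$ to $[\elei 0i,\elei{-1}{p-i}]=\frac ip C_i$. Conjugate-linearity with the real coefficient $\frac ip$ gives $\frac ip\theta(C_i)=[\theta(\elei{-1}{p-i}),\theta(\elei0i)]$, and we substitute the values from Step 2.

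In the case $\lmd=-1$ this is $A_{-1,p-i}A_{0,i}[\elei{-1}{p-i},\elei0i]$. Using the bracket rule, $[\elei{-1}{p-i},\elei0i]=(-1+\frac{p-i}p)C_i=-\frac ipC_i$. Hence $\theta(C_i)=-A_{0,i}A_{-1,p-i}C_i$.

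In the case $\lmd=1$ we get $A_{0,i}A_{-1,p-i}[\elei0i,\elei{-1}{p-i}]$, up to the index bookkeeping. Checking the index conventions, $\theta(\elei{-1}{p-i})=A_{0,i}\elei0i$ and $\theta(\elei0i)=A_{-1,p-i}\elei{-1}{p-i}$. This gives $\theta(C_i)=A_{0,i}A_{-1,p-i}C_i$. Both cases are captured by $\theta(C_i)=\lmd A_{0,i}A_{-1,p-i}C_i$.

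The main obstacle is Step 1, specifically pinning down $\lmd=\pm1$ rigorously. I would do this using the dimension pattern of the eigenspaces together with invariance of the spectrum $\frac1p\Z$ under $c\mapsto-c/\lmd$. A scaling that preserves $\frac1p\Z$ bijectively has factor $\pm1$.
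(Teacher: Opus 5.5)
\textbf{The gap.} Your proof fails at the first step, in the same place as the paper's. You conclude $\theta(\h)=\h$, and hence $\theta(L_0)=\lmd L_0+C(L_0)$, from the claim that $\h$ exhausts the $\mathrm{ad}$-semisimple elements of $\g$. That claim is false.

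The derivation $\mathrm{ad}\,\elei01$ is nilpotent. It sends $L_m\mapsto\frac1p\elei m1$, sends $\elei nj\mapsto\frac1p\dt_{j,p-1}\dt_{n,-1}C_1$, and kills $\cc$. So $\sigma_s=\exp(s\,\mathrm{ad}\,\elei01)$ is an automorphism of $\g$ for every $s\in\C$, and $\sigma_s(L_0)=L_0+\frac sp\elei01$. This element is $\mathrm{ad}$-semisimple but lies outside $\h$ when $s\neq0$.

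In fact the lemma itself is false as stated. Take $\theta_0=\theta^+_{1,\be}$ with all $\be_i=1$, and set $\theta=\sigma_s\theta_0\sigma_s^{-1}$. This is again a conjugate-linear anti-involution, and a direct computation gives
\[\theta(L_0)=L_0+\frac sp\elei01-\frac{\ov s}p\elei{-1}{p-1}-\frac{|s|^2}{p^2}C_1\notin\h .\]
So no argument can prove part (1) in general. The statement, and the classification in Proposition~\ref{prop:invg}, needs the extra hypothesis $\theta(\h)=\h$. One expects the general case to reduce to this after conjugating by automorphisms like $\sigma_s$, but that needs a separate argument.

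\textbf{The rest, assuming $\theta(L_0)\in\h$.} Under that assumption your proof is correct and takes a slightly different route from the paper.
\begin{itemize}
\item For $\lmd=\pm1$, the paper gets $|\lmd|=1$ from $\theta^2=\mathrm{id}$ and then realness from one eigenvalue equation. You instead note that $c\mapsto -c/\lmd$ must be a bijection of the spectrum $\frac1p\Z$ of $\mathrm{ad}\,L_0$. This forces $\lmd=\pm1$ without using $\theta^2=\mathrm{id}$.
\item Your Step 2 is cleaner than the paper's. The paper expands $\theta(\elei ni)$ with no $L_m$-components before justifying that. Your observation that the non-integer eigenspaces are one-dimensional, spanned by the $\elei ni$, handles this properly.
\item Your Step 3 is the paper's computation.
\end{itemize}

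Two small slips, neither of which affects the argument:
\begin{itemize}
\item The eigenvalue of $\mathrm{ad}\,L_0$ on $\elei ni$ is $-(n+\frac ip)$, not $n+\frac ip$. This is harmless because $c\mapsto\mp c$ commutes with negation.
\item $\H$ is not the unique maximal nilpotent ideal, since $\H+\cc$ is a strictly larger nilpotent ideal. You do not actually use that characterization.
\end{itemize}
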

\begin{proof}
 (1) Let $x\in\cc, y\in\g$ and $z=\theta(y)$.
 Then $y=\theta^2(y)=\theta(z)$ and $[\theta(x),y]=\theta[z,x]=0$, proving that $\theta(\cc)=\cc$.
 Since
 \[[\theta(L_0),\theta(L_n)]=\theta[L_n,L_0]=n\theta(L_n)\quad\text{and}\quad
   [\theta(L_0),\theta(\elei ni)]=\theta[\elei ni,L_0]=\(n+\frac ip\)\theta(\elei ni)\]
 for any $n\in\Z,1\le i\le p-1$, we see that $\theta(L_0)\in\h\setminus\cc$.
 So $\theta(\h)=\h$ and we may write
 \[\theta(L_0)=\lmd L_0+C(L_0)\quad\text{for some } \lmd\in\C, C(L_0)\in\cc.\]
 From $L_0=\theta^2(L_0)=\theta(\lmd L_0+C(L_0))=\ov\lmd\lmd L_0+\ov\lmd C(L_0)+\theta(C(L_0))$ we know that $\lmd\in S^1$.

 For any $n\in\Z,1\le i\le p-1$, write (a finite sum)
 \[\theta(\elei ni)=\sum_{k\in\Z}\sum_{j=1}^{p-1} A_{k,j}\elei kj+C(\elei ni),
  \quad\text{where }C(\elei ni)\in\cc,  A_{k,j}\in\C^\times.\]
 From the equation
 \begin{equation}\label{eq:lmd}
  \(n+\frac ip\)\theta(\elei ni)=\theta[\elei ni,L_0]=[\theta(L_0),\theta(\elei ni)]
   =-\lmd\sum_{k\in\Z}\sum_{j=1}^{p-1}\(k+\frac jp\) A_{k,j}\elei kj
 \end{equation}
 we see that $-\lmd\(k+\frac jp\)=n+\frac ip$ for a fixed pair of $k,j$.
 This implies that $\lmd$ is a rational number and hence $\lmd=\pm1$, proving (1).
 Moreover, we have
 \[\begin{cases} k=-n-1, j=p-i & \text{ if }\lmd=1,\\
              k=n, j=i & \text{ if }\lmd=-1.
   \end{cases}\]

 (2) The equation \eqref{eq:lmd} also implies that $C(\elei ni)=0$.
 Then we get \eqref{value:ini} from the above equation.
 For any $1\le i\le p-1$, one can see from the following equation
 \[\frac ip\theta(C_i)=\theta[\elei 0i,\elei{-1}{p-i}]=[\theta(\elei {-1}{p-i}),\theta(\elei 0i)]=\lmd A_{0,i} A_{-1,p-i}\frac ip C_i\]
 that $\theta(C_i)=\lmd  A_{0,i} A_{-1,p-i}C_i$.
 This completes the proof.
\end{proof}

\begin{proposition}\label{prop:invg}
 Any conjugate-linear anti-involution of $\g$ is one of the following types.\\
 (1) $\invp(L_n)=\al^nL_{-n},\ \invp(\elei ni)=\al^{n}\be_{p-i}\elei{-n-1}{p-i},\
     \invp(C_0)=C_0,\ \invp(C_i)=\al^{-1}\be_i\,\be_{p-i}C_i$
     with $\al\in\R^\times , \be_i\in\C^\times $ such that $\ov{\be_i}\,\be_{p-i}=\al$ for all $1\le i\le p-1$, and $\be=(\be_1,\dots,\be_{p-1})$;\\
 (2) $\invm(L_n)=-\al^nL_{n},\ \invm(\elei ni)=\al^n\be_i\elei ni,\
     \invm(C_0)=-C_0,\ \invm(C_i)=-\al^{-1}\be_i\be_{p-i}C_i$
      with $\al, \be_i\in S^1$.
\end{proposition}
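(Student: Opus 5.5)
Let $\theta$ be a conjugate-linear anti-involution of $\g$. The plan is to start from Lemma \ref{lem:invg} and reduce to the Virasoro case. We write $\lmd=\pm1$, $\theta(L_0)=\lmd L_0+C(L_0)$, and use the formulas \eqref{value:ini} for $\theta(\elei ni)$ with nonzero constants $A_{n,i}$.

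\textbf{Step 1: $\theta$ preserves $\V$ modulo the center.} For $m\neq0$, the identity $[\theta(L_0),\theta(L_m)]=m\theta(L_m)$ shows that $\theta(L_m)$ is an $\mathrm{ad}\,L_0$-eigenvector of eigenvalue $-\lmd m$. The integer-graded part of $\g$ is spanned by the $L_k$ and $\cc$, so $\theta(L_m)\in \C L_{-\lmd m}+\cc$, and for $m\ne0$ even $\theta(L_m)\in\C L_{-\lmd m}$. Thus $\theta$ induces a conjugate-linear anti-involution of $\g/\H$, which is a Virasoro algebra modulo center. Next we have to get rid of the central components. Since $\theta(\H)=\H$ by Lemma \ref{lem:invg}, $\theta$ preserves $[\g,\g]\cap$ (the part generated by the $L$'s). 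From $L_0=\frac12[L_1,L_{-1}]$ we get $\theta(L_0)=\frac12[\theta(L_{-1}),\theta(L_1)]$, which lies in $\C L_0+\C C_0$. Hence $\theta(\V)=\V$, and $\theta|_\V$ is an anti-involution of $\V$. Proposition \ref{prop:invvir} then gives $\theta|_\V=\theta^\pm_\al$, with $\al\in\R^\times$ when $\lmd=1$ and $\al\in S^1$ when $\lmd=-1$. In particular $C(L_0)=0$ and $\theta(C_0)=\pm C_0$.

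\textbf{Step 2: constrain the coefficients $A_{n,i}$ via $[L_m,\elei ni]$.} Take $\lmd=1$. Apply $\theta$ to $[L_m,\elei ni]=-(n+\frac ip)\elei{m+n}i$. The left side becomes $[\theta(\elei ni),\theta(L_m)]=A_{-n-1,p-i}\al^m[\elei{-n-1}{p-i},L_{-m}]$. Using $[L_{-m},\elei{-n-1}{p-i}]=-(-n-1+\frac{p-i}p)\elei{-m-n-1}{p-i}=(n+\frac ip)\elei{-m-n-1}{p-i}$, this is $-(n+\frac ip)\al^mA_{-n-1,p-i}\elei{-m-n-1}{p-i}$. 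The right side is $-(n+\frac ip)A_{-m-n-1,p-i}\elei{-m-n-1}{p-i}$, since $n+\frac ip\in\R$ is never zero. Therefore $A_{-m-n-1,p-i}=\al^mA_{-n-1,p-i}$, so $A_{k,j}=\al^{-k-1}A_{-1,j}$ for all $k$. Setting $\be_{p-i}$ suitably (so that $\theta(\elei ni)=\al^n\be_{p-i}\elei{-n-1}{p-i}$) normalizes this. The case $\lmd=-1$ is analogous and gives $A_{n,i}=\al^nA_{0,i}=:\al^n\be_i$.

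\textbf{Step 3: impose $\theta^2=\mathrm{id}$ and compute $\theta(C_i)$.} For $\lmd=1$, compute $\theta^2(\elei ni)=\ov{\al^n\be_{p-i}}\,\al^{-n-1}\be_i\elei ni$. With $\al$ real, the condition $\theta^2=\mathrm{id}$ forces $\ov{\be_{p-i}}\be_i=\al$ for all $i$, which is equivalent to the stated condition after swapping $i\leftrightarrow p-i$ and conjugating. For $\lmd=-1$, we get $\ov{\al^n\be_i}\al^n\be_i=1$ for all $n$, hence $|\be_i|=1$. The value of $\theta(C_i)$ then follows from Lemma \ref{lem:invg}(2) by substituting the $A$'s: for $\lmd=1$, $A_{0,i}A_{-1,p-i}=\al^{-1}\be_{p-i}\be_i$, and for $\lmd=-1$, $A_{0,i}A_{-1,p-i}=\be_i\al^{-1}\be_{p-i}$ with sign $-1$.

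\textbf{Converse and main obstacle.} Finally, I would check conversely that the listed maps are anti-involutions, by verifying the brackets $[\elei mi,\elei nj]$ directly; this is routine. I expect the main obstacle to be Step 1, namely ruling out central components in $\theta(L_m)$ and $\theta(L_0)$, which is needed before Proposition \ref{prop:invvir} can be applied. The fallback is to derive these constraints directly from the brackets $[L_m,L_{-m}]$ and the eigenvalue argument.
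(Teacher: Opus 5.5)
Your argument is correct and follows essentially the paper's route: Lemma \ref{lem:invg}, the $\mathrm{ad}\,L_0$-eigenvalue argument, Proposition \ref{prop:invvir}, then the relations $[L_m,I_n^{(i)}]$ and $\theta^2=\mathrm{id}$. The differences are that you apply Proposition \ref{prop:invvir} to $\theta|_{\V}$ after showing $\theta(\V)=\V$ (rather than to the induced map on $\g/\H$), and that you work out the case $\lambda=-1$ explicitly; the one step to state outright is $\theta(C_0)\in\V$, which follows from $C_0=2[L_2,L_{-2}]-8L_0$ together with $\theta(L_{\pm2}),\theta(L_0)\in\V$.
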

\begin{proof}
 Let $\theta$ be a conjugate-linear anti-involution of $\g$,
 which fixes $\H$ by Lemma \ref{lem:invg}(2) and hence induces a conjugate-linear anti-involution, denoted by $\ov\theta$, on the quotient algebra $\g/\H$.
 For an element $x\in\g$, we denote by $\ov x$ the image of $x$ in $\g/\H$ in the following.
 Since $\g/\H\cong\V$, we see from Proposition \ref{prop:invvir} that $\ov\theta$ is one of the following two types.\\
 (i) $\ov{\theta_\al^+}: \ov{L_n}\mapsto\al^n\ov{L_{-n}},\ \ov{C_0}\mapsto\ov{C_0}$ for some $\al\in\R^\times $.\\
 (ii) $\ov{\theta_\al^-}: \ov{L_n}\mapsto-\al^n\ov{L_n},\ \ov{C_0}\mapsto-\ov{C_0}$ for some $\al\in S^1$.\\
 In the following we divide the proof into two cases according to the two types of $\ov\theta$.

 \textbf{Case 1}: $\ov\theta=\ov{\theta_\al^+}$.
 For any $n\in\Z$ we may write (a finite sum)
 \[\theta(L_n)=\al^nL_{-n}+\sum_{k\in\Z}\sum_{j=1}^{p-1}\coefb\elei kj+C(L_n),\]
 where $\coefb\in\C, C(L_n)\in\cc$.
 In particular, comparing with Lemma \ref{lem:invg} we see that $\lmd=1$ and $\theta(L_0)=L_0+C(L_0)$.
 Moreover, we have $\thi ni=A_{-n-1,p-i}\elei{-n-1}{p-i}$ by \eqref{value:ini}.

 For $n\in\Z^\times $ we have
 \[n\thl n=[\thl0,\thl n]
   =[L_0,\al^nL_{-n}+\sum_{k\in\Z}\sum_{j=1}^{p-1}\coefb\elei kj]
   =n\al^nL_{-n}-\sum_{k\in\Z}\sum_{j=1}^{p-1}\(k+\frac jp\)\coefb\elei kj.
 \]
 This gives that $C(L_n)=0$ and $\coefb=0$ for all $k,j$.
 So $\thl n=\al^nL_{-n}$ for $n\neq0$.
 From
 \begin{align*}
   &\theta[L_n,L_{-n}]=2n\thl0+\frac1{12}(n^3-n)\theta(C_0)
    =2nL_0+2nC(L_0)+\frac1{12}(n^3-n)\theta(C_0)\\
    &=[\thl{-n},\thl n]=2nL_0+\frac1{12}(n^3-n)C_0
 \end{align*}
 we see that $C(L_0)=0$ and $\theta(C_0)=C_0$.
 Therefore $\thl n=\al^nL_{-n}$ for any $n\in\Z$.

 Using \eqref{value:ini} and expanding $\theta[L_n,\elei 0i]=[\thi0i,\thl n]$ for any $n\in\Z,1\le i\le p-1$,
 we get $ A_{-n-1,p-i}=\al^n A_{-1,p-i}$, or equivalently,
 \[ A_{n,i}=\al^{-n} A_{0,i}\quad\text{for any}\ n\in\Z,1\le i\le p-1.\]
 Set $\be_i=\al A_{0,i}$ for any $1\le i\le p-1$ and set $\be=(\be_1,\be_2,\dots,\be_{p-1})$.
 Then we have $\theta(\elei ni)=\al^n\be_{p-i}\elei {-n-1}{p-i}$
 and $\theta(C_i)=\al^{-1}\be_i\be_{p-i}C_i$ by Lemma \ref{lem:invg}(2).
 Moreover, from $\elei ni=\theta^2(\elei ni)=\al^{-1}\ov{\be_{p-i}}\,\be_i\elei ni$ we get $\ov{\be_{p-i}}\,\be_i=\al=\ov{\be_i}\,\be_{p-i}$.
 This shows $\theta=\invp$.

 \textbf{Case 2}: $\ov\theta=\ov{\theta_\al^-}$.
 Similarly this case leads to $\theta=\invm$.
\end{proof}

\begin{corollary}
 The subalgebras $\V,\H,\gjc,\hj$ are stable under any conjugate-linear anti-involution of $\g$.
\end{corollary}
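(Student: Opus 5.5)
The statement follows directly from the explicit formulas in Proposition \ref{prop:invg}, so I will check each subalgebra's spanning set against the two families $\invp$ and $\invm$. By that proposition, every conjugate-linear anti-involution $\theta$ of $\g$ equals $\invp$ or $\invm$ for suitable $\al$ and $\be$. Since $\theta$ is conjugate-linear and additive, a subspace spanned by a set $S$ is $\theta$-stable as soon as $\theta(S)$ lies in that span.

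For $\V$, with spanning set $\{L_n, C_0\}$:
\begin{itemize}
\item $\invp(L_n)=\al^nL_{-n}$ and $\invm(L_n)=-\al^nL_n$ both lie in $\V$.
\item $\theta(C_0)=\pm C_0\in\V$.
\end{itemize}

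For $\H$, this is already contained in Lemma \ref{lem:invg}(2). Directly:
\begin{itemize}
\item $\theta(\elei ni)$ is a scalar multiple of $\elei{-n-1}{p-i}$ or of $\elei ni$.
\item $\theta(C_i)$ is a scalar multiple of $C_i$.
\end{itemize}

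For $\gjc$ and $\hj$, the only extra point is that the index sets $J$ and $\jc$ are symmetric, i.e.\ stable under $i\mapsto p-i$.
\begin{itemize}
\item For $i\in J$ (resp.\ $i\in\jc$), $\invp(\elei ni)$ is a multiple of $\elei{-n-1}{p-i}$ with $p-i\in J$ (resp.\ $\jc$).
\item $\invm(\elei ni)$ is a multiple of $\elei ni$ with the same index $i$.
\item $\theta(C_i)$ is a multiple of $C_i$, and $C_i=C_{p-i}$.
\end{itemize}
Together with the computation for $\V$ (the generators $L_n,C_0$ of $\gjc$), this gives $\theta(\gjc)\subseteq\gjc$ and $\theta(\hj)\subseteq\hj$.

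There is no real obstacle here. The only point to state explicitly is that the $\invp$ case sends index $i$ to $p-i$, so stability of $\gjc$ and $\hj$ relies on the symmetry of $J$ and $\jc$, which was noted right after their definition. Since $\theta^2=\mathrm{id}$, each inclusion $\theta(X)\subseteq X$ is in fact an equality.
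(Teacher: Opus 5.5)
Your proof is correct and takes the same approach as the paper, whose proof simply says the corollary follows from Proposition \ref{prop:invg}. You have written out the generator-by-generator check that the paper leaves implicit, including the one point worth making explicit: $\invp$ sends index $i$ to $p-i$, so stability of $\gjc$ and $\hj$ relies on the symmetry of $J$ and $\jc$.
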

\begin{proof}
 It follows from Proposition \ref{prop:invg}.
\end{proof}

\section{Unitary Harish-Chandra modules}

In this section we determine the condition for any irreducible Harish-Chandra module over $\g$ to be unitary.
First we have the following
\begin{proposition}\label{prop:uninv}
Let $V$ be a nontrivial irreducible weight $\g$-module which is unitary for some conjugate-linear anti-involution of $\g$, then $V$ is unitary for some $\theta_{1,\be}^+$ where $\be=(\be_1,\be_2,\dots,\be_{p-1})\in \C^{p-1}$ such that $\ov{\be_i}\,\be_{p-i}=1$ for all $1\le i\le p-1$.
\end{proposition}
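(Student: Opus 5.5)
By Proposition \ref{prop:invg}, $V$ is unitary for some $\invp$ or some $\invm$. The plan has three steps. First rule out the type $\invm$. Then use the Virasoro results to force $\al>0$. Finally rescale to reach $\al=1$.

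\textbf{Excluding $\invm$.} Suppose $V$ is unitary for $\invm$ with $\al\in S^1$, and let $\<\cdot,\cdot\>$ be the form. Since $\V$ is stable under $\invm$, its restriction to $\V$ is $\theta^-_\al$, so $V$ is a unitary weight $\V$-module. Unitary weight modules are completely reducible, so $V$ decomposes as a direct sum of irreducible unitary weight $\V$-modules. By Proposition \ref{prop:unitarycon}(1), no nontrivial summand can be unitary for $\theta^-_\al$, so $\V$ acts trivially on $V$. Then $L_0$ acts by $0$. But $[L_0,\elei ni]=-(n+\frac ip)\elei ni$, so each $\elei ni$ shifts the $L_0$-eigenvalue by $n+\frac ip\neq 0$. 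Hence $\elei ni V=0$ for all $n,i$. Now $\g$ acts only through the center, so irreducibility makes $V$ one-dimensional and trivial, since $C_i=[\elei 0i,\elei{-1}{p-i}]\cdot\frac pi$ acts by $0$. This contradicts nontriviality. I expect this step to be the main obstacle, because one must check that the triviality deduced for $\V$ really propagates to all of $\g$. The eigenvalue argument above is the tool I would use.

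\textbf{Forcing $\al>0$.} Now suppose $V$ is unitary for $\invp$ with $\al\in\R^\times$. The same restriction argument and Proposition \ref{prop:unitarycon}(1) give $\al>0$ on every nontrivial $\V$-summand. If all summands were trivial, the argument above would again make $V$ trivial, so a nontrivial summand exists and $\al>0$.

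\textbf{Rescaling to $\al=1$.} I would mimic the Virasoro proof of Proposition \ref{prop:unitarycon}(2), which twists by the automorphism $L_n\mapsto \al^{n/2}L_n$; here it is better to twist the form. Let $V=\oplus_\mu V_\mu$ be the $L_0$-eigenspace decomposition. Eigenspaces are orthogonal because $\invp(L_0)=L_0$ is self-adjoint. Within each coset $\mu+\frac1p\Z$, define a new form by
\[\<u,v\>'=\al^{-(\mu-\mu_0)}\<u,v\>\quad\text{for }u,v\in V_\mu,\]
with a fixed base point $\mu_0$ in each coset and distinct eigenspaces orthogonal. This form is positive definite since $\al>0$. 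Take $u\in V_\mu$, so that $L_nu\in V_{\mu+n}$ and $\elei ni u\in V_{\mu+n+i/p}$. A direct check then gives
\[\<L_nu,v\>'=\<u,\al^{-n/2}\cdot\al^{n}\al^{-n/2}L_{-n}v\>'=\<u,L_{-n}v\>'\]
up to the correct exponent bookkeeping. I would use weight $\al^{-(\mu-\mu_0)/2}$-type scaling and adjust so that all powers of $\al$ cancel. Similarly $\elei ni$ gets adjoint $\al^{c}\be_{p-i}\elei{-n-1}{p-i}$, where $c$ is independent of $n$ after cancellation. Absorbing that constant gives $\be'_{p-i}=\al^{-1/2}\be_{p-i}$ (say), and then $\ov{\be'_i}\,\be'_{p-i}=\al^{-1}\cdot\al=1$. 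Since $\theta^+_{1,\be'}$ is a conjugate-linear anti-involution by Proposition \ref{prop:invg}, the new form makes $V$ unitary for it. The only real work in this step is the exponent bookkeeping.
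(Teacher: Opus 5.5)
Your proof is correct in substance. Its first two steps are the paper's argument, just organized differently: restrict to $\V$, use complete reducibility and Proposition \ref{prop:unitarycon}(1) on a summand, and use $[L_0,\elei ni]=-(n+\frac ip)\elei ni$ to show that $\V$ cannot act trivially on a nontrivial irreducible $V$.

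The real difference is the passage from $\invp$ with $\al>0$ to $\al=1$. The paper only writes ``we may assume $\theta|_\V=\theta_1^+$ by Proposition \ref{prop:unitarycon}'' and then reads off $\theta=\theta^+_{1,\be}$ from Proposition \ref{prop:invg}. But part (2) of that proposition is a statement about irreducible $\V$-modules. It says nothing about how the form on the whole $\g$-module $V$, or the parameters $\be$, must change. Your rescaling of the form on $L_0$-eigenspaces is exactly the argument that justifies this ``we may assume'', and it makes the proof self-contained.

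Your bookkeeping does need fixing. With the paper's bracket $[L_0,L_n]=-nL_n$, one has $L_nV_\mu\subseteq V_{\mu-n}$ and $\elei niV_\mu\subseteq V_{\mu-n-\frac ip}$, the opposite of what you wrote. The eigenvalues are real, since $L_0$ is self-adjoint, and they lie in one coset $\mu_0+\frac1p\Z$.

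The right choice is the full power $\<u,v\>'=\al^{\mu-\mu_0}\<u,v\>$ for $u,v\in V_\mu$. Your $\al^{-(\mu-\mu_0)}$ is the same thing under your reversed sign convention. A half power $\al^{\pm(\mu-\mu_0)/2}$ on the form leaves an uncancelled power of $\al$ depending on $n$.

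With the full power, for $u\in V_\mu$ and $v\in V_{\mu-n}$ you get $\<L_nu,v\>'=\al^{\mu-n-\mu_0}\al^n\<u,L_{-n}v\>=\<u,L_{-n}v\>'$. For $v\in V_{\mu-n-\frac ip}$,
\[\<\elei niu,v\>'=\al^{\mu-n-\frac ip-\mu_0}\al^n\,\ov{\be_{p-i}}\,\<u,\elei{-n-1}{p-i}v\>=\<u,\al^{-\frac ip}\be_{p-i}\elei{-n-1}{p-i}v\>'.\]
So the new anti-involution is $\theta^+_{1,\be'}$ with $\be'_{p-i}=\al^{-\frac ip}\be_{p-i}$, not $\al^{-\frac12}\be_{p-i}$.

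This still gives $\ov{\be'_i}\,\be'_{p-i}=\al^{-1}\,\ov{\be_i}\,\be_{p-i}=1$. Moreover $\be'_i\be'_{p-i}=\al^{-1}\be_i\be_{p-i}$, so $\theta^+_{1,\be'}(C_i)=\invp(C_i)$. Hence the central elements, which preserve eigenspaces, are handled automatically.
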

\begin{proof}
 Suppose $V$ is unitary for a conjugate-linear anti-involution $\theta$ of $\g$.
 Consider $V$ as a $\V$-module and $V$ is unitary for $\theta|_\V$ the restriction of $\theta$ to $\V$.
 We claim that $V$ is nontrivial as a $\V$-module.
 Otherwise, for any $v\in V, n\in\Z,1\le i\le p-1$,
 it follows from $[L_0,\elei ni]v=0$ that $\elei niv=0$.
 Hence $C_iv=0$ and $V$ is a trivial $\g$-module, a contradiction.

 Since unitary weight $\V$-modules are completely reducible,
 $V$ contains a nontrivial irreducible unitary $V$-submodule for $\theta|_\V$.
 So we may assume $\theta|_\V=\theta_1^+$ by Proposition \ref{prop:unitarycon}.
 Then the statement follows from Proposition \ref{prop:invg}.
\end{proof}

In the following we determine the condition for irreducible Harish-Chandra $\g$-modules to be unitary for a fixed conjugate-linear anti-involution $\theta_{1,\be}^+$ of $\g$ as in Proposition \ref{prop:uninv}.
We first do it for the $\g$-module $V(a,b,F)$ of intermediate series.
Recall the set
$$\cf=\{0\le j\le p-1\mid F_{ij}\neq 0\ \text{ for some }i\}.$$

\begin{theorem}\label{thm:unitaryintser}
 The $\g$-module $V(a,b,F)$ of intermediate series is unitary for $\theta_{1,\be}^+$
 if and only if $a\in\R, b\in\frac12+\sqrt{-1}\R$ and $\be_{p-i}F_{p-i,\ov{i+j}}=\ov{F_{i,j}}$ for any $j\in\cf,1\le i\le p-1$.
\end{theorem}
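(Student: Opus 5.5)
Both directions go through the weight space decomposition of $V=V(a,b,F)$. For $\theta_{1,\be}^+$ we have $\theta(L_0)=L_0$, so weight vectors of different $L_0$-eigenvalues must be orthogonal in any invariant form. The eigenvalue of $L_0$ on $\vct kj$ is $-(a+k+\frac jp)$, and these are pairwise distinct for distinct pairs $(k,j)$ with $0\le j\le p-1$. Hence any invariant Hermitian form is diagonal in the basis $\{\vct kj\}$, and we write
\[\<\vct kj,\vct kj\>=d_{k,j}.\]
Positive definiteness means that every $d_{k,j}$ is positive; the invariance conditions are then equations among these numbers.

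\textbf{Necessity.} Suppose $V$ is unitary. Restricting to $\V$, the form restricts to a positive definite $\theta_1^+$-invariant form on each $V_{(j)}\cong V(a+\frac jp,b)$. If $V_{(j)}$ is irreducible, Proposition \ref{prop:unitaryVmod}(1) gives $a+\frac jp\in\R$ and $b\in\frac12+\sqrt{-1}\R$. The reducible case ($a\in\Z$, $b\in\{0,1\}$) needs a separate check: there $b\notin\frac12+\sqrt{-1}\R$, so we must show unitarity fails directly. We apply the invariance identity $\<L_n v,v'\>=\<v,L_{-n}v'\>$ to diagonal vectors. This gives
\[-(a+k+\tfrac jp+bn)\,d_{n+k,j}=-\ov{(a+n+k+\tfrac jp-bn)}\,d_{k,j},\]
and taking $n=0$ together with positivity of the $d$'s forces $a\in\R$ whenever the module is nontrivial. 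For $b\in\{0,1\}$, a suitable choice of $k$ and $n$ makes one side vanish while the other does not, which contradicts $d>0$. This mirrors the Virasoro argument, and I expect it to be routine but the place needing care.

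Next we use invariance for $\elei ni$. We have $\theta(\elei ni)=\be_{p-i}\elei{-n-1}{p-i}$. Pairing $\elei ni\vct kj$ with $\vct{n+k}{i+j}$ gives
\[F_{i,j}\,d_{n+k,\ov{i+j}}=\ov{\be_{p-i}F_{p-i,\ov{i+j}}}\,d_{k,j}.\]
Here one must track how the index shift is encoded when $i+j\ge p$; I will read $\vct{m}{i+j}$ as $\vct{m+1}{i+j-p}$, so the $k$-indices shift by one, but this only relabels the $d$'s. The right-hand side carries a conjugation because the form is conjugate-linear in the second slot. The $\V$-invariance already forces the ratios of the $d$'s within each $V_{(j)}$; indeed with $b=\frac12+\sqrt{-1}t$, the relation above gives $d_{n+k,j}=d_{k,j}$. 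So all $d_{k,j}$ within one $V_{(j)}$ are equal to a constant $d_j$. The $\H$-relation then reads
\[F_{i,j}\,d_{\ov{i+j}}=\ov{\be_{p-i}F_{p-i,\ov{i+j}}}\,d_j.\]

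The main obstacle is extracting the stated condition $\be_{p-i}F_{p-i,\ov{i+j}}=\ov{F_{i,j}}$ from this relation, that is, showing the $d_j$ can be taken to be equal. I would use the constraint $\ov{\be_i}\be_{p-i}=1$ together with the compatibility relation $F_{s,i}F_{r,\ov{i+s}}=F_{r,i}F_{s,\ov{i+r}}$ to produce a consistent rescaling. Concretely, rescale the basis, $\vct kj\mapsto d_j^{-1/2}\vct kj$. This keeps $V\cong V(a,b,F')$ with a rescaled matrix $F'$, for which the form becomes the standard one and the equation becomes exactly $\be_{p-i}F'_{p-i,\ov{i+j}}=\ov{F'_{i,j}}$. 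So the theorem should be read up to this isomorphism, or else one checks that the relation is invariant under such rescalings.

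\textbf{Sufficiency.} Take $d_{k,j}=1$ for all $k,j$. The $L_n$-identities hold because $a$ is real and $\ov b=1-b$. The $\elei ni$-identities are exactly the hypothesis. The central elements act by zero, so their invariance is trivial.
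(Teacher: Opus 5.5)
Your argument follows the paper's proof: sufficiency via the standard form $\<\vct mi,\vct nj\>=\dt_{m,n}\dt_{i,j}$, then $a\in\R$ and $b\in\frac12+\sqrt{-1}\R$ by restricting to each $V_{(j)}$ and quoting Proposition~\ref{prop:unitaryVmod}(1), then the condition on $F$ by pairing $\theta_{1,\be}^+(\elei 0i)\,v_{\frac{i+j}p}$ with $v_{\frac jp}$ for a normalized form. The paper's normalization (``replacing the basis elements $v_{\frac jp}$ with a suitable multiple'') is exactly your rescaling $\vct kj\mapsto d_j^{-1/2}\vct kj$. It silently replaces $F$ by $F'_{i,j}=\sqrt{d_{\ov{i+j}}/d_j}\,F_{i,j}$. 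So the paper, like you, only proves the condition for some $F'$ with $V(a,b,F)\cong V(a,b,F')$. Your explicit diagonal form, constancy of $d_{k,j}$ on each $V_{(j)}$, and single global rescaling are tidier than the paper's pairwise normalization. One small point: for $b=\frac12$ the coefficient $a+k+\frac jp+\frac n2$ vanishes for one value of $n$, so constancy of $d_{k,j}$ needs a two-step path; this is routine.

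The one thing to fix is your closing hedge. Commit to the up-to-isomorphism reading, because the alternative (that the relation is invariant under rescalings) is false. Under $\vct kj\mapsto c_j\vct kj$ one gets $F_{i,j}\mapsto (c_j/c_{\ov{i+j}})F_{i,j}$. The stated condition for the new matrix is then equivalent to $\be_{p-i}F_{p-i,\ov{i+j}}\,|c_{\ov{i+j}}|^2=\ov{F_{i,j}}\,|c_j|^2$ for the old one, which changes with the moduli $|c_j|$.

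Here is a concrete counterexample to the literal statement. Take $p=2$, $a=0$, $b=\frac12$, $\be_1=1$ and $(F_{1,0},F_{1,1})=(1,4)$. The diagonal form with $\<v_k,v_k\>=1$ and $\<v_{k+\frac12},v_{k+\frac12}\>=4$ is positive definite and $\theta_{1,\be}^+$-invariant. Yet $\be_1F_{1,1}=4\neq1=\ov{F_{1,0}}$.

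So the literal ``only if'' fails. What your argument (and the paper's) actually proves is: $V(a,b,F)$ is unitary for $\theta_{1,\be}^+$ if and only if $a\in\R$, $b\in\frac12+\sqrt{-1}\R$, and there exist $d_j>0$ ($j\in\cf$) with $\be_{p-i}F_{p-i,\ov{i+j}}\,d_j=\ov{F_{i,j}}\,d_{\ov{i+j}}$. Equivalently, the stated condition holds after a positive rescaling of the basis.

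Two minor points. For necessity you need neither $\ov{\be_i}\be_{p-i}=1$ nor the compatibility relation on $F$; $c_j=d_j^{-1/2}$ works as is. The separate treatment of the reducible case is redundant, because Proposition~\ref{prop:unitaryVmod}(1) is stated for all $V(a,b)$; your direct check there is nonetheless correct.
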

\begin{proof}
 If $a\in\R, b\in\frac12+\sqrt{-1}\R$ and $\be_{p-i}F_{p-i,\ov{i+j}}=\ov{F_{i,j}}$ for any $j\in\cf,1\le i\le p-1$,
 then it is easy to check that $V(a,b,F)$ is unitary for $\theta_{1,\be}^+$ and
 the positive definite Hermitian form $\<\cdot,\cdot\>$ defined by
 \[\<v_{m+\frac ip},v_{n+\frac jp}\>=\dt_{m,n}\dt_{i,j}
 \quad\text{for }m,n\in\Z,i,j\in\cf.\]

 Conversely, suppose that $V(a,b,F)$ is unitary for $\theta_{1,\be}^+$ and some positive definite Hermitian form $\<\cdot,\cdot\>$.
 Then all $V_{(j)}$'s are unitary $\V$-modules for $\(\theta_{1,\be}^+\)|_\V=\theta_{1}^+$.
 Therefore, $a\in\R, b\in\frac12+\sqrt{-1}\R$ by Proposition \ref{prop:unitaryVmod}(1).
 Moreover, for any $j\in\cf,1\le i\le p-1$,
 normalize the Hermitian form $\<\cdot,\cdot\>$ such that $\<v_{\frac{i+j}p},v_{\frac{i+j}p}\>=\<v_{\frac jp},v_{\frac jp}\>=1$
 by replacing the basis elements $v_{\frac jp}$ with a suitable multiple if necessary.
 Then we have
 \begin{align*}
 \be_{p-i}F_{p-i,\ov{i+j}}&=\be_{p-i}F_{p-i,\ov{i+j}}\<v_{\frac jp},v_{\frac jp}\>
   =\<\be_{p-i}\elei{-1}{p-i}v_{\frac{i+j}p},v_{\frac jp}\>
   =\<\theta_{1,\be}^+(\elei 0i)v_{\frac{i+j}p},v_{\frac jp}\>\\
   &=\<v_{\frac{i+j}p},\elei 0i v_{\frac jp}\>
   =\ov{F_{i,j}}\,\<v_{\frac{i+j}p},v_{\frac{i+j}p}\>=\ov{F_{i,j}}.
 \end{align*}
 This completes the proof.
\end{proof}

Next we are to determine the condition for irreducible highest weight $\g$-modules to be unitary for $\theta_{1,\be}^+$ and some positive definite Hermitian form.

Recall the irreducible highest weight $\g$-module with highest weight $\vf$,
which by Theorem \ref{thm:irrgmod} is isomorphic to $V_{\V,\psi\mid_\V}$ if $J=\emptyset$,
and isomorphic to $\mhjphig\otimes\(V_{\V,\psi\mid_\V}\)^e$ if $J\neq\emptyset$.
The following lemma is easy to check.

\begin{lemma}\label{lem:tensorunitarymod}
 Let $V,W$ be two unitary module over a Lie algebra $\mathcal L$ for some conjugate-linear anti-involution $\theta$ and positive definite Hermitian forms $\<\cdot,\cdot\>_V$ and $\<\cdot,\cdot\>_W$ separately.
 Then the tensor product $\g$-module $V\otimes W$ is unitary for $\theta$ and the positive definite Hermitian form $\<\cdot,\cdot\>$ defined by
 \[\<v_1\otimes w_1,v_2\otimes w_2\>=\<v_1,v_2\>_V\<w_1,w_2\>_W
    \quad\text{ for }v_1,v_2\in V, w_1,w_2\in W.\]
\end{lemma}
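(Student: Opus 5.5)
The plan is to verify the three things a unitary structure requires: the proposed form on $V\otimes W$ is Hermitian, it is positive definite, and it satisfies the contravariance identity with respect to $\theta$. Each check is carried out on pure tensors and then extended by sesquilinearity. The form is well defined because the assignment $(v_1,w_1,v_2,w_2)\mapsto\<v_1,v_2\>_V\<w_1,w_2\>_W$ is linear in $(v_1,w_1)$ and conjugate-linear in $(v_2,w_2)$ (with the paper's convention). It therefore factors through $V\otimes W$ in each slot by the universal property, applied to $V\otimes W$ and to its complex conjugate. The Hermitian symmetry $\<y,x\>=\ov{\<x,y\>}$ holds on pure tensors because it holds for each factor, and both sides are sesquilinear, so it holds everywhere.

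Positive definiteness is the step needing care. Given a nonzero $x\in V\otimes W$, we choose an orthonormal basis $e_1,\dots,e_r$ of the finite-dimensional subspace of $V$ spanned by the first tensor factors of some expression of $x$. Such a basis exists by Gram--Schmidt, since $\<\cdot,\cdot\>_V$ is positive definite. We then rewrite $x=\sum_k e_k\otimes w_k$ with $w_k\in W$, and since $x\neq0$, some $w_k$ is nonzero. This gives
\[\<x,x\>=\sum_{k,l}\<e_k,e_l\>_V\<w_k,w_l\>_W=\sum_k\<w_k,w_k\>_W>0.\]

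For contravariance, the module action is $x(v\otimes w)=xv\otimes w+v\otimes xw$. For pure tensors we compute
\[\<x(v_1\otimes w_1),v_2\otimes w_2\>=\<xv_1,v_2\>_V\<w_1,w_2\>_W+\<v_1,v_2\>_V\<xw_1,w_2\>_W .\]
Applying unitarity of $V$ and $W$ turns this into $\<v_1,\theta(x)v_2\>_V\<w_1,w_2\>_W+\<v_1,v_2\>_V\<w_1,\theta(x)w_2\>_W$, which equals $\<v_1\otimes w_1,\theta(x)(v_2\otimes w_2)\>$. Both sides of the identity are linear in the first argument and conjugate-linear in the second. The identity therefore extends from pure tensors to all of $V\otimes W$.

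The only mildly nontrivial point is positive definiteness, and the orthonormal-basis reduction above handles it. Everything else is a direct computation on pure tensors.
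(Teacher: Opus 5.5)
Your proof is correct and supplies the routine verification the paper leaves out (the paper gives no proof, only remarking that the lemma is easy to check): Hermitian symmetry and contravariance reduce to pure tensors via $x(v\otimes w)=xv\otimes w+v\otimes xw$, and your orthonormal-basis reduction handles positive definiteness, the one step that is not purely formal. One wording fix: in the well-definedness step the map should be called bilinear in $(v_1,w_1)$ and conjugate-bilinear in $(v_2,w_2)$, not just ``linear'', since bilinearity is what the universal property of $V\otimes W$ requires.
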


Notice that the $\g$-module $\mhjphig$ is a highest weight module with highest weight $\phi^\g$ such that
\[\phi^\g(L_0)=\sum_{j\in J}\frac{j(p-j)}{4p^2},\quad \phi^\g(C_0)=|J|,\quad
  \phi^\g(C_i)=\begin{cases}
           \phi(C_i)=\vf(C_i) &\text{if }i\in J;\\
                           0  &\text{if }i\notin J.
        \end{cases}
\]
\begin{proposition}\label{prop:unitaryHJmod}
 The $\g$-module $\mhjphig$ is unitary for $\theta_{1,\be}^+$ if and only if $\be_i\phi(C_i)\in\R^\times $ for all $i\in J$.
\end{proposition}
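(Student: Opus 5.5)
The plan is to work directly with the Fock-space realization of $\mhjphig$ from Proposition \ref{gmodext}, with the highest weight vector $v$ annihilated by all $\elei ni$, $n\ge0$. Recall that for $\theta_{1,\be}^+$ we have $\theta(\elei ni)=\be_{p-i}\elei{-n-1}{p-i}$ and $\theta(L_n)=L_{-n}$, with $\ov{\be_i}\,\be_{p-i}=1$. In particular $\be_{p-i}=1/\ov{\be_i}$ and $\be_i\be_{p-i}=\be_i/\ov{\be_i}$.

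\emph{Necessity.} Suppose $\<\cdot,\cdot\>$ is a positive definite contravariant form. Fix $i\in J$ and $n\ge0$, and compute
\[
\<\elei{-n-1}{p-i}v,\elei{-n-1}{p-i}v\>=\be_i\<v,\elei ni\elei{-n-1}{p-i}v\>=\be_i\Bigl(n+\frac ip\Bigr)\ov{\phi(C_i)}\,\<v,v\>.
\]
The conjugate appears because the form is conjugate-linear in the second slot; this depends on the convention, so I would fix it at the start. Positivity forces $\be_i\ov{\phi(C_i)}$ to be a real number of fixed (positive) sign. Next I would apply the compatibility $\<C_iv,v\>=\<v,\theta(C_i)v\>$, i.e. $\phi(C_i)=\ov{\be_i\be_{p-i}\phi(C_i)}$. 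Combined with the above, this converts the condition into $\be_i\phi(C_i)\in\R^\times$. The index $p-i$ gives the same condition, since $\be_{p-i}\phi(C_i)=\be_i\phi(C_i)/|\be_i|^2$. In fact this computation yields positivity of $\be_i\phi(C_i)$, so I would check carefully whether the statement's $\R^\times$ should read $\R_{>0}$ and adjust the sign bookkeeping accordingly.

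\emph{Sufficiency.} Assume the condition holds. Take the PBW basis of $\mhjphig=\U(\hj^-)v$ consisting of ordered monomials in the creation operators $\elei{-n-1}{j}$ with $n\ge0$ and $j\in J$. Define the form by declaring distinct monomials orthogonal and giving each monomial the norm predicted by contravariance. Each creation operator $a=\elei{-n-1}j$ pairs with $\theta(a)=\be_{p-j}\elei n{p-j}$, and $[\theta(a),a]$ is the scalar $\be_{p-j}(n+\frac{p-j}p)\phi(C_j)$, which is positive under the hypothesis. Distinct creation modes commute, and any annihilator commutes with all creation operators except its partner. Hence this is a tensor product of one-mode oscillator Fock spaces, with norms equal to factorials times positive constants. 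This gives positive definiteness and the relation $\<xu,w\>=\<u,\theta(x)w\>$ for $x\in\hj$. For $\elei ni$ and $C_i$ with $i\notin J$, both sides vanish.

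\emph{Main obstacle.} The main step is verifying contravariance for $L_n$, which acts by the Sugawara expression \eqref{eq:Lnaction}. Since adjoints of $\hj$-elements are given by $\theta$, the adjoint of $\frac1{2\phi(C_j)}\elei kj\elei l{p-j}$ is
\[
\frac{\be_j\be_{p-j}}{2\ov{\phi(C_j)}}\,\elei{-l-1}j\elei{-k-1}{p-j}.
\]
Writing $\phi(C_j)=r/\be_j$ with $r$ real gives $\phi/\ov\phi=\ov{\be_j}/\be_j$, so the coefficient becomes $1/(2\phi(C_j))$. After reindexing $k\mapsto -n-k-1$ and using the symmetry $j\leftrightarrow p-j$ of $J$ (with $C_j=C_{p-j}$), the sum maps to the expression for $L_{-n}$. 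Normal ordering is preserved up to the reindexing, and I need to check that the reordering constants match. The constant term at $n=0$ is real. Finally, $C_0$ acts by the real scalar $|J|$, which completes the proof.
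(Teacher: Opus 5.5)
Your closing suspicion is right, and it is the central issue. As written, with $\R^\times$, the ``if'' direction of the statement is false, so you should commit to the corrected condition rather than hedge. For any contravariant form, $\theta_{1,\be}^+(\elei{-1}{p-i})=\be_i\elei 0i$ gives
\[
\<\elei{-1}{p-i}v,\elei{-1}{p-i}v\>=\<v,\be_i\elei 0i\elei{-1}{p-i}v\>=\frac ip\,\ov{\be_i\phi(C_i)}\,\<v,v\>.
\]
Since $\elei{-1}{p-i}v\neq0$ in the Fock module, unitarity forces $\be_i\phi(C_i)>0$ for every $i\in J$. A concrete counterexample: take $p=2$, $\be_1=1$, $\vf(C_1)=-1$. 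Then $\be_1\phi(C_1)\in\R^\times$, yet $\elei{-1}{1}v$ has norm $-\frac12\<v,v\>$.

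The paper's proof has exactly this defect. Its necessity computation is just $\<C_iv,v\>=\<v,\theta_{1,\be}^+(C_i)v\>$, which yields only reality. Its sufficiency part asserts, without checking, that the $\rho$-form is positive definite, and this fails when some $\be_i\phi(C_i)<0$. The same correction is needed in condition (1) of Theorem~\ref{thm:unitaryhimod}.

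With $\R^\times$ replaced by $\R_{>0}$, your route is the paper's route. For necessity you use contravariance at the highest weight vector. For sufficiency you use the Shapovalov-type Fock form and the Sugawara adjoint computation. Your explicit orthogonal monomial basis supplies the positivity check the paper omits: the norms are built from the constants $\be_{p-j}(n+\frac{p-j}p)\phi(C_j)=(n+\frac{p-j}p)\be_j\phi(C_j)/|\be_j|^2$.

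Two repairs are needed in your write-up.

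\textbf{The necessity formula.} Your displayed norm formula mixes conventions. Pulling $\be_i$ out of a conjugate-linear slot gives $\ov{\be_i}$, so the scalar is $\ov{\be_i\phi(C_i)}$. If the form is linear in the second slot, it is $\be_i\phi(C_i)$. It is never $\be_i\ov{\phi(C_i)}$. With the correct scalar, positivity gives $\be_i\phi(C_i)>0$ immediately, and the appeal to the $C_i$-relation is unnecessary. Taken literally, your version together with reality of $\be_i\phi(C_i)$ would force $\be_i^2\in\R$, which is a spurious restriction.

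\textbf{The $L_n$ step.} For $n\neq0$, the factors $\elei kj$ and $\elei{n-k-1}{p-j}$ commute, because their indices sum to $n-1\neq-1$. So normal ordering plays no role. Substituting $k=k'+n$ turns the adjoint $\frac1{2\phi(C_j)}\elei{k-n}j\elei{-k-1}{p-j}$ directly into the $j$-th summand of $L_{-n}$, with no need for the $j\leftrightarrow p-j$ swap. The case $n=0$ follows because $L_0$ is diagonal with real eigenvalues on your monomial basis, and distinct weight spaces are orthogonal.
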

\begin{proof}
 Suppose that $\mhjphig$ is unitary for $\theta_{1,\be}^+$.
 Let $v$ be a nonzero highest weight vector in $\mhjphig$.
 Then for any $i\in J$, we have (noting that $\be_i\,\ov{\be_{p-i}}=1$)
 \begin{align*}
  \frac ip\be_i&\phi(C_i)\<v,v\>=\be_i\<[\elei 0i,\elei{-1}{p-i}]v,v\>
     =\be_i\<\elei 0i\elei{-1}{p-i}v,v\>
     =\be_i\<v,\theta_{1,\be}^+(\elei{-1}{p-i})\theta_{1,\be}^+(\elei 0i) v\>\\
     &=\be_i\,\ov{\be_i\,\be_{p-i}}\<v,\elei 0i\elei{-1}{p-i}v\>
     =\ov{\be_i}\<v,[\elei 0i,\elei{-1}{p-i}]v\>
     =\frac ip\ov{\be_i\phi(C_i)}\<v,v\>,
 \end{align*}
 which implies that $\ov{\be_i\phi(C_i)}={\be_i\phi(C_i)}$.
 So $\be_i\phi(C_i)\in\R^\times $.

 Conversely, suppose $\be_i\phi(C_i)\in\R^\times $ for all $i\in J$.
 Notice that the universal enveloping algebra $\U(\hj)$ has a decomposition
 \[\U(\hj)=\U(\hj\cap\cc)\oplus\(\sum_{i\in J}\sum_{n\in\Z_+}\elei {-n}i\U(\hj)
          +\U(\hj)\elei {n-1}i\).\]
 Denote by $\rho:\U(\hj)\longrightarrow\U(\hj\cap\cc)$ the projection onto the first summand in the above decomposition.
 Then the space $\mhjphi$ admits a positive definite Hermitian form $\<\cdot,\cdot\>$ defined by
 \[\<xv,yv\>v=\rho\(\widetilde{\theta_{1,\be}^+}(x)y\)v,\]
 where $x,y\in\U\(\sum_{i\in J}\sum_{n\in\Z_+}\C\elei {-n}i\)$
 and $\widetilde{\theta_{1,\be}^+}$ is the anti-involution of $\U(\hj)$ extended from
 $\theta_{1,\be}^+$ by
 \[\widetilde{\theta_{1,\be}^+}(x_1\cdots x_k)
    =\theta_{1,\be}^+(x_k)\cdots\theta_{1,\be}^+(x_1)
    \quad\text{ for any } x_1,\dots ,x_k\in\hj.\]
 Then it is clear that the $\hj$-module $\mhjphi$ is unitary for $\theta_{1,\be}^+|_{\hj}$.

 Now for $n\in\Z^\times , u,w\in\mhjphig$,
 we have (noting that $\be_i\,\ov{\be_{p-i}}=1$)
 \begin{align*}
  &\<L_nu,w\>=\sum_{j\in J}\frac1{2\phi(C_j)}\sum_{k\in\Z}
            \<\elei kj\elei{n-k-1}{p-j}u,w\>
    =\sum_{j\in J}\frac1{2\phi(C_j)}\sum_{k\in\Z}\<u,
          \theta_{1,\be}^+\(\elei{n-k-1}{p-j}\)\theta_{1,\be}^+\(\elei kj\) w\>\\
  &=\sum_{j\in J}\frac{\ov{\be_j\be_{p-j}}}{2\phi(C_j)}\sum_{k\in\Z}\<u,
          \elei {-n+k}j\elei{-k-1}{p-j} w\>
   =\sum_{j\in J}\frac{1}{2\ov{\phi(C_j)}}\sum_{k\in\Z}\<u,
          \elei {k}j\elei{-n-k-1}{p-j} w\>\\
   &=\<u,L_{-n}w\>=\<u,\theta_{1,\be}^+(L_n)w\>.
 \end{align*}
 Since $L_0$ acts semisimply on $\mhjphig$, it is clear that $\<L_0u,w\>=\<u,L_0w\>$.
 Moreover, for any $i\in\jc,n\in\Z, u,w\in\mhjphig$,
 we have $\<\elei ni u,w\>=0=\<u,\theta_{1,\be}^+(\elei ni)w\>.$
 So we have
 \[\<xu,w\>=\<u,\theta_{1,\be}^+(x)w\>\quad\text{for any }x\in\g,u,w\in\mhjphig.\]
 This shows that the $\g$-module $\mhjphig$ is unitary for $\theta_{1,\be}^+$.
\end{proof}

\begin{proposition}\label{prop:unitaryVmode}
 The $\g$-module $\(V_{\V,\psi\mid_\V}\)^e$ is unitary for $\theta_{1,\be}^+$ if and only if $\psi(C_0)\ge1,\psi(L_0)\ge0$, or there exists an integer $m\ge2$ and $r,s\in\Z$ such that
 \[0\le r<s<m,\quad \psi(C_0)=1-\frac6{m(m+1)},\quad \psi(L_0)=\frac{(mr+s)^2-1}{4m(m+1)}.\]
\end{proposition}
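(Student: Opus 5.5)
The plan is to reduce the statement to Proposition \ref{prop:unitaryVmod}(2), which is Proposition \ref{prop:unitaryVmod}(2) for the Virasoro algebra $\V$ with $\phi=\psi\mid_\V$. The structural fact doing the work is that on $\(V_{\V,\psi\mid_\V}\)^e$ the ideal $\hj$ acts trivially. By construction of $\mgjcpsi$ and its irreducible quotient, all $\elei ni$ with $i\in\jc$ and all $C_i$ with $i\ge1$ also act by zero. Hence the action of $\g$ on this module factors through the projection $\g\to\V$ (modulo the kernel spanned by $\H$), and the module is just $V_{\V,\psi\mid_\V}$ with $\H$ acting trivially.

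\emph{Necessity.} Suppose $\(V_{\V,\psi\mid_\V}\)^e$ is unitary for $\theta_{1,\be}^+$ with some positive definite Hermitian form. By Proposition \ref{prop:invg}, $\theta_{1,\be}^+$ maps $\V$ to itself and its restriction there is exactly $\theta_1^+$. So the same form makes the $\V$-module $V_{\V,\psi\mid_\V}$ unitary for $\theta_1^+$. Proposition \ref{prop:unitaryVmod}(2), applied with $\phi(C_0)=\psi(C_0)$ and $\phi(L_0)=\psi(L_0)$, then gives the stated alternatives.

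\emph{Sufficiency.} Assume $\psi$ satisfies one of the conditions. By Proposition \ref{prop:unitaryVmod}(2), $V_{\V,\psi\mid_\V}$ carries a positive definite Hermitian form with $\<xu,w\>=\<u,\theta_1^+(x)w\>$ for all $x\in\V$. We keep the same form on $\(V_{\V,\psi\mid_\V}\)^e$ and check the contravariance identity on the basis of $\g$:
\begin{itemize}
\item for $L_n$ and $C_0$, it holds because $\theta_{1,\be}^+(L_n)=L_{-n}=\theta_1^+(L_n)$ and $\theta_{1,\be}^+(C_0)=C_0$;
\item for $\elei ni$ and $C_i$ with $i\ge1$, both sides vanish: $\theta_{1,\be}^+(\elei ni)$ is a multiple of $\elei{-n-1}{p-i}$ and $\theta_{1,\be}^+(C_i)$ is a multiple of $C_i$, and all of these act by zero.
\end{itemize}
Conjugate-linearity of $\theta_{1,\be}^+$ extends the identity from the basis to all of $\g$, so the module is unitary.

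There is no real obstacle here. The only point needing care is confirming that every element of $\H$, including the central elements $C_i$ with $i\in\jc$, acts trivially on $\(V_{\V,\psi\mid_\V}\)^e$. This holds because $\psi(C_i)=0$ for $i\in\jc$ by \eqref{psi}, the module is an extension by zero, and $\theta_{1,\be}^+$ preserves $\H$ by Lemma \ref{lem:invg}.
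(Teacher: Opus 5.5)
Your proof is correct and takes the same route as the paper. Since $\mathcal H$ acts trivially on $(V_{\V,\psi\mid_\V})^e$, $\theta_{1,\be}^+$ preserves $\mathcal H$, and $\theta_{1,\be}^+$ restricts to $\theta_1^+$ on $\V$, unitarity of the $\g$-module is equivalent to unitarity of the $\V$-module $V_{\V,\psi\mid_\V}$ for $\theta_1^+$; Proposition \ref{prop:unitaryVmod}(2) then finishes, and your basis-by-basis check of both directions simply spells out what the paper states in one sentence.
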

\begin{proof}
 Since $\H\(V_{\V,\psi\mid_\V}\)^e=0$, it is clear that the $\g$-module $\(V_{\V,\psi\mid_\V}\)^e$ is unitary for $\theta_{1,\be}^+$ if and only if the $\V$-module $V_{\V,\psi\mid_\V}$ is unitary for $\theta_{1,\be}^+|_\V=\theta_{1}^+$.
 Then it follows from Proposition \ref{prop:unitaryVmod}.
\end{proof}

\begin{theorem}\label{thm:unitaryhimod}
 The irreducible highest weight $\g$-module of highest weight $\vf$ is unitary for the conjugate-linear anti-involution $\theta_{1,\be}^+$ if and only if the following two statements stand.\\
 (1) $\be_i\vf(C_i)\in\R^\times $ for all $i\in J$;\\
 (2) $\vf(C_0)\ge|J|+1,\vf(L_0)\ge\sum_{j\in J}\frac{j(p-j)}{4p^2}$, or there exists an integer $m\ge2$ and $r,s\in\Z$ such that
 \[0\le r<s<m,\quad \vf(C_0)=|J|+1-\frac6{m(m+1)},\quad \vf(L_0)=\sum_{j\in J}\frac{j(p-j)}{4p^2}+\frac{(mr+s)^2-1}{4m(m+1)}.\]
\end{theorem}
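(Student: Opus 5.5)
The plan is to use the decomposition in Theorem \ref{thm:irrgmod}(2): write $V$ for the irreducible highest weight module of highest weight $\vf$, so $V\cong\mhjphig\otimes\(V_{\V,\psi\mid_\V}\)^e$ when $J\neq\emptyset$ and $V\cong\(V_{\V,\psi\mid_\V}\)^e$ when $J=\emptyset$. By \eqref{psi}, the conditions of Proposition \ref{prop:unitaryVmode} on $\psi$ become exactly condition (2) on $\vf$, using $\psi(C_0)=\vf(C_0)-|J|$ and $\psi(L_0)=\vf(L_0)-\sum_{j\in J}\frac{j(p-j)}{4p^2}$. When $J=\emptyset$, condition (1) is vacuous and the theorem is just Proposition \ref{prop:unitaryVmode}. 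So assume $J\neq\emptyset$.

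\emph{Sufficiency.} Assume (1) and (2). Since $\phi(C_i)=\vf(C_i)$ for $i\in J$, Proposition \ref{prop:unitaryHJmod} shows $\mhjphig$ is unitary for $\theta_{1,\be}^+$. By (2) and Proposition \ref{prop:unitaryVmode}, $\(V_{\V,\psi\mid_\V}\)^e$ is unitary for $\theta_{1,\be}^+$. Lemma \ref{lem:tensorunitarymod} then makes the tensor product, and hence $V$, unitary.

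\emph{Necessity.} Suppose $V$ is unitary for $\theta_{1,\be}^+$ with form $\<\cdot,\cdot\>$, and let $v$ be a highest weight vector. Condition (1) follows from the computation in the proof of Proposition \ref{prop:unitaryHJmod}, which used only the highest weight vector: $\frac ip\be_i\vf(C_i)\<v,v\>=\frac ip\ov{\be_i\vf(C_i)}\<v,v\>$. Since $\vf(C_i)\neq0$ for $i\in J$, this gives $\be_i\vf(C_i)\in\R^\times$.

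For (2), I want to show $\(V_{\V,\psi\mid_\V}\)^e$ is unitary. The main obstacle is extracting this factor from the tensor product. The plan is to use the vacuum-type subspace
\[
W=\{u\in V\mid \elei ni u=0 \text{ for all } n\ge0,\ i\in J\}.
\]
In $\mhjphig\otimes\(V_{\V,\psi\mid_\V}\)^e$ this subspace equals $v_\phi\otimes V_{\V,\psi\mid_\V}$, because the Fock module $\mhjphi$ has only $\C v_\phi$ annihilated by its positive part, as it is irreducible. On $W$, define the modified operators $L_n'=L_n-L_n^{\hj}$, where $L_n^{\hj}$ is the Sugawara-type expression from \eqref{eq:Lnaction}. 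By Theorem \ref{thm:irrgmod}, $L_n'$ acts as $1\otimes L_n$ and commutes with $\hj$. Also, $\<L_n^{\hj}u,w\>=\<u,L_{-n}^{\hj}w\>$ holds on $V$ by the same calculation as in Proposition \ref{prop:unitaryHJmod}, now using (1) so that $\ov{\be_j\be_{p-j}}/\ov{\phi(C_j)}=1/\phi(C_j)$ up to the required identity. It follows that $L_n'$ satisfies $\<L_n'u,w\>=\<u,L_{-n}'w\>$. Restricting the positive definite form to $W\cong V_{\V,\psi\mid_\V}$ makes this $\V$-module, with the action $L_n'$, unitary for $\theta_1^+$. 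Proposition \ref{prop:unitaryVmod}(2) then gives the conditions on $\psi$, which are (2).

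A technical point I need to check is that the infinite Sugawara sums act well on $V$, which holds because $\hj$ acts locally nilpotently in the positive direction, and that the adjointness of $L_n^{\hj}$ needs only the $\hj$-part of unitarity.
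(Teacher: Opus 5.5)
Your proof is correct. For sufficiency and for condition (1) it is the paper's argument; for the necessity of (2) it supplies a step that the paper's one-line proof leaves implicit.

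\textbf{Where you agree with the paper.} Sufficiency is exactly the paper's proof: Lemma \ref{lem:tensorunitarymod} together with Propositions \ref{prop:unitaryHJmod} and \ref{prop:unitaryVmode}. Your derivation of (1) is the highest-weight-vector computation from the proof of Proposition \ref{prop:unitaryHJmod}, which indeed uses nothing beyond the vector $v$.

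\textbf{Where you differ.} The paper justifies the theorem only by citing those three results. Lemma \ref{lem:tensorunitarymod} goes in one direction only, so unitarity of $\mhjphig\otimes(V_{\V,\psi\mid_\V})^e$ does not by itself give unitarity of the Virasoro factor. Your vacuum space $W=v_\phi\otimes V_{\V,\psi\mid_\V}$, combined with the coset operators $L_n'=L_n-L_n^{\hj}$, is precisely the converse that is missing. It also works for an arbitrary invariant form on $V$, not only a product form. The cost is two extra checks: that the Sugawara sums are finite on $V$, and that $L_n^{\hj}$ is adjoint to $L_{-n}^{\hj}$. The second check needs (1) first, and you correctly establish (1) before using it.

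\textbf{Two small repairs.}
\begin{enumerate}
\item The identity you wrote, $\ov{\be_j\be_{p-j}}/\ov{\phi(C_j)}=1/\phi(C_j)$, is not the right one; it is equivalent to $\ov{\be_j}\phi(C_j)\in\R$. What the adjointness computation actually needs is $\ov{\be_j\be_{p-j}}/\phi(C_j)=1/\ov{\phi(C_j)}$, that is, $\be_j\be_{p-j}\phi(C_j)=\ov{\phi(C_j)}$. This follows from $\be_j\vf(C_j)\in\R$ and $\ov{\be_j}\,\be_{p-j}=1$.
\item For $n=0$, the paper's semisimplicity remark does not transfer to an arbitrary form on $V$. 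Instead, note that on $W$ every normal-ordered term of $L_0^{\hj}$ has a mode $I_k^{(i)}$ with $k\ge0$ and $i\in J$ on the right, so it vanishes. Hence $L_0'=L_0-\sum_{j\in J}\frac{j(p-j)}{4p^2}$ on $W$, which is self-adjoint because $\theta_{1,\be}^+(L_0)=L_0$. Similarly, $C_0$ acts on $W$ by the real scalar $\psi(C_0)$, since $\theta_{1,\be}^+(C_0)=C_0$.
\end{enumerate}
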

\begin{proof}
It follows from Lemma \ref{lem:tensorunitarymod}, Proposition \ref{prop:unitaryHJmod} and Proposition \ref{prop:unitaryVmode}.
\end{proof}

Denote by $\omega$ the Chevalley involution of $\g$ defined by
\[\omega(L_n)=-L_{-n},\ \omega(\elei ni)=-\elei{-n}{p-i},
\ \omega(C_j)=-C_j\ \text{for any }n\in\Z, 1\le i\le p-1, 0\le j\le p-1.\]

Let $V$ be an irreducible lowest weight module of $\g$. Then the following action
\[x\cdot w=\omega(x)w\quad\text{for any }x\in \g, w\in V,\]
makes $V$ a highest weight $\g$-module.
We denote the resulting module by $V^*$.
It is easy to check that $V$ is unitary under some positive definite Hermitian form $<\cdot,\cdot>$ and $\theta_{1,\be}^+$ if and only if
$V^*$ is unitary under $<\cdot,\cdot>$ and $\theta_{1,\be'}^+$,
where $\be'=(\be_1',\dots,\be_{p-1}')$ with $\be_i'=\be_{p-i}$ for all $1\le i\le p-1$.

Now we are in position to state the final theorem in this section,
which follows from Theorem \ref{thm:unitaryintser}, Theorem \ref{thm:unitaryhimod} and the above argument.
\begin{theorem}
Any irreducible unitary Harish-Chandra module over $\g$ for the conjugate-linear anti-involution $\theta_{1,\be}^+$ is one of the following modules.\\
(1) $V(a,b,F)$ with $a\in\R^\times ,b\in\frac12+\sqrt{-1}\R$ and $\be_iF_{p-i,\ov{i+j}}=\ov{F_{i,j}}$ for any $j\in\cf,1\le i\le p-1$;\\
(2) a highest weight module of highest weight $\vf$ satisfying the conditions in Theorem \ref{thm:unitaryhimod};\\
(3) a lowest weight module of lowest weight $-\vf$, where $\vf$ satisfies the conditions in Theorem \ref{thm:unitaryhimod}.
\end{theorem}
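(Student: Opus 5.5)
The plan is to combine the classification of irreducible Harish-Chandra $\g$-modules from \cite{Xu} with the three unitarity criteria already established. By that classification, an irreducible Harish-Chandra module $V$ over $\g$ is a module of intermediate series $V(a,b,F)$, an irreducible highest weight module, or an irreducible lowest weight module. I will treat these three cases separately, in each case assuming $V$ is unitary for the fixed $\theta_{1,\be}^+$.

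For the intermediate series case, Theorem \ref{thm:unitaryintser} gives the constraints: $a\in\R$, $b\in\frac12+\sqrt{-1}\R$, and the compatibility condition between $\be$ and $F$. Before applying it, I need to reconcile the indexing. The statement of (1) writes $\be_iF_{p-i,\ov{i+j}}$, whereas Theorem \ref{thm:unitaryintser} has $\be_{p-i}$. Using $\ov{\be_i}\be_{p-i}=1$ I will check whether these agree after reindexing, and otherwise read the intended condition as the one in Theorem \ref{thm:unitaryintser}. I will also note the restriction $a\in\R^\times$. One reading is that it is inessential, since $V(a,b,F)\cong V(a+1,b,F)$ after shifting indices. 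Another is that it excludes the reducible case $|\cf|=1$, $a\in\Z$, $b\in\{0,1\}$; but that case is already excluded because $b\in\frac12+\sqrt{-1}\R$. Either way, irreducibility is automatic here, and the conditions only restrict the parameters.

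For the highest weight case, I apply Theorem \ref{thm:unitaryhimod} directly. That theorem rests on the tensor decomposition in Theorem \ref{thm:irrgmod}(2) together with Propositions \ref{prop:unitaryHJmod} and \ref{prop:unitaryVmode}. For the necessity direction, the restriction of the Hermitian form to the $\hj$-part and the $\gjc$-part is controlled through the highest weight vector, and this is what produces conditions (1) and (2).

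For the lowest weight case, let $V$ be an irreducible lowest weight module with lowest weight $-\vf$ that is unitary for $\theta_{1,\be}^+$. Twisting by the Chevalley involution $\omega$ gives the highest weight module $V^*$. From the defining formula for $\omega$, its highest weight is $\vf$, because $\omega(L_0)=-L_0$ and $\omega(C_j)=-C_j$. By the remark preceding the theorem, $V^*$ is unitary for $\theta_{1,\be'}^+$ with the same form. Theorem \ref{thm:unitaryhimod}, applied with $\be'$, then gives the conditions on $\vf$, with $\be_i$ replaced by $\be_{p-i}$ in condition (1). Since $\vf(C_i)=\vf(C_{p-i})$, this replacement is harmless up to relabeling.

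The main obstacle is this bookkeeping in the lowest weight case. One must verify that $\omega$ is a genuine Lie algebra automorphism. One must also check that $\theta_{1,\be}^+$ corresponds to $\omega\circ\theta_{1,\be'}^+\circ\omega^{-1}$; this requires comparing $\theta_{1,\be}^+(\elei ni)=\be_{p-i}\elei{-n-1}{p-i}$ with $\omega(\elei ni)=-\elei{-n}{p-i}$ and tracking the index shift by one. I expect this to hold after a routine computation, and with it the three cases together give the listed modules.
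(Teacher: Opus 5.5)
Your proposal is correct and follows the paper's own argument: the classification of \cite{Xu} gives three cases, which are settled by Theorem \ref{thm:unitaryintser}, Theorem \ref{thm:unitaryhimod} and the Chevalley twist. Your observation that the condition with $\be'$ is equivalent to the one with $\be$ (via $\vf(C_i)=\vf(C_{p-i})$ and the symmetry of $J$) supplies a step the paper leaves implicit.

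Your two deferred checks resolve as follows. The printed condition $\be_iF_{p-i,\ov{i+j}}=\ov{F_{i,j}}$ has left-hand side $|\be_i|^2$ times that of Theorem \ref{thm:unitaryintser}. This factor need not be $1$ when $p\ge3$, so your fallback reading is the right one. The involution must be $\omega(\elei ni)=-\elei{-n-1}{p-i}$, since the printed $-\elei{-n}{p-i}$ neither preserves $[L_m,\elei ni]$ nor sends $\g_+$ into $\g_-$. The corrected $\omega$ satisfies $\omega\,\theta_{1,\be}^+\,\omega=\theta_{1,\be'}^+$ with $\be'_i=\be_{p-i}$, as you need.
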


\bigskip

\textbf{Acknowledgments:}
C. Xu is supported by the National Natural Science Foundation of China (No. 12261077).


\begin{thebibliography}{99}

\bibitem{Ba}
K. Barron.
On twisted modules for $N=2$ supersymmetric vertex operator superalgebras.
In: Lie Theory and its Applications in Physics. Springer Proceedings in Mathematics \& Statistics, vol. 36. Tokyo: Springer, 2013, 411-420.


\bibitem{CP1986}
V. Chari and A. Pressley.
New unitary representations of loop groups.
Math. Ann. 275(1986), 87-104.


\bibitem{CP1987}
V. Chari and A. Pressley.
A new family of irreducible, integrable modules for affine Lie algebras.
Math. Ann. 275(1987), 277-562.


\bibitem{CP1988}
V. Chari and A. Pressley.
Unitary representations of the Virasoro algebra and a conjecture of Kac.
Compos. Math. 67(1988), 315-342.


\bibitem{FF}
B. Feigin and D. Fuks.
Verma modules over the Virasoro algebra.
Funct. Anal. Appl. 17(1983), no. 3, 91-92.


\bibitem{KL}
V. Kac and J. Liberati.
Unitary quasi-finite representations of $W_\infty$.
Lett. Math. Phys. 53(2000), 11-27.


\bibitem{Kwon}
N. Kwon.
On the study of unitary representations of the twisted Heisenberg-Virasoro algebra via highest weight modules over affine Lie algebras.
J. Nonlinear Math. Phys. 21(2014), no. 4, 584-592.


\bibitem{LPXZ}
D. Liu, Y. Pei, L. Xia and K. Zhao.
Irreducible modules over the mirror Heisenberg-Virasoro algebra.
Commun. Contemp. Math. 24(2022), 2150026.


\bibitem{Ma}
O. Mathieu.
Classification of Harish-Chandra modules over the Virasoro Lie algebra.
Invent. Math. 107(1992), 225-234.


\bibitem{MP}
R. Moody and A. Pianzola.
Lie algebras with triangular decomposition.
CMS series of monographs and advanced texts. J. Wiley-Int. Publ. N.Y. 1995.


\bibitem{Palev1}
T. Palev.
Highest weight irreducible unitary representations of Lie algebras of infinite matrices. I. The algebra $\mathfrak{gl}(\infty)$.
J. Math. Phys. 31(1990), no. 3, 579-586.


\bibitem{Palev2}
T. Palev.
Highest weight irreducible unitarizable representations of Lie algebras of infinite matrices. The algebra $A_\infty$.
J. Math. Phys. 31(1990), no. 5, 1078-1084.


\bibitem{Xu}
C. Xu.
Classification of irreducible Harish-Chandra modules over gap-$p$ Virasoro algebras.
J. Algebra 542(2020), 1-34.


\bibitem{XCT}
C. Xu, F. Chen and S. Tan.
Non-weight modules over gap-$p$ Virasoro algebras.
arXiv:2505.15273.


\bibitem{ZT1}
X. Zhang and S. Tan.
$\theta$-unitary represnetations for the $W$-algebra $W(2,2)$.
Linear Multilinear Algebra, 60(2012), no. 5, 533-543.


\bibitem{ZT2}
X. Zhang and S. Tan.
Unitary representations for the twisted Schr{\"o}dinger-Virasoro algebra.
J. Algebra Appl. 12(2013), 1250132.


\bibitem{ZTL}
X. Zhang, S. Tan and H. Lian.
Unitary modules for the twisted Heisenberg-Virasoro algebra.
Algebra Colloq. 26(2019), no. 3, 529-540.


\end{thebibliography}
\end{document}